\theoremstyle{plain}
\newtheorem{lemma}{Lemma}[section] 
\newtheorem{theorem}[lemma]{Theorem}
\newtheorem{corollary}[lemma]{Corollary}
\newtheorem{proposition}[lemma]{Proposition}
\theoremstyle{definition}
\newtheorem{example}[lemma]{Example}
\newtheorem{definition}[lemma]{Definition}
\newtheorem{remarks}[lemma]{Remarks}
\newcommand{\Zset}{\mathbb Z}
\newcommand{\Nset}{\mathbb N}
\newcommand{\M}{\operatorname{\mathbb M}}
\newcommand{\gr}{\operatorname{gr}}
\newcommand{\ol}{\overline}
\newcommand{\so}{\mathbf{s}}
\newcommand{\ra}{\mathbf{r}}
\newcommand{\G}{\mathcal G}
\title[Graded chain conditions and Leavitt path algebras of no-exit graphs]{Graded chain conditions and Leavitt path algebras of no-exit graphs}
\author{Lia Va\v s}
\address{Department of Mathematics, Physics and Statistics, University of the Sciences, Philadelphia, PA 19104, USA}
\email{l.vas@usciences.edu}
\subjclass[2000]{16S10, 16W50, 16W10, 16D25, 16D70} 
\keywords{Leavitt path algebra, graded ring, no-exit graph, local units, graded noetherian, graded artinian} 
\thanks{The author is grateful to Roozbeh Hazrat for his insightful comments on graded rings and to the referee for thoughtful suggestions and inspiring remarks.}
\begin{document}
\begin{abstract} We obtain a complete structural characterization of Cohn-Leavitt algebras over no-exit objects as graded involutive algebras. Corollaries of this result include graph-theoretic conditions characterizing when a Leavitt path algebra is a directed union of (graded) matricial algebras over the underlying field and over the algebra of Laurent polynomials and when the monoid of isomorphism classes of finitely generated projective modules is atomic and cancellative. 

We introduce the non-unital generalizations of graded analogues of noetherian and artinian rings, graded locally noetherian and graded locally artinian rings, and characterize graded locally noetherian and graded locally artinian Leavitt path algebras without any restriction on the cardinality of the graph. As a consequence, we relax the assumptions of the Abrams-Aranda-Perera-Siles characterization of locally noetherian and locally artinian Leavitt path algebras. 
\end{abstract}

\maketitle
 
\section{Introduction}

Leavitt path algebras of directed graphs are the algebraic counterparts of graph $C^*$-algebras and generalizations of Leavitt algebras. Since the introduction of Leavitt path algebras in the mid 2000s, many results, generalizations, and applications to other areas of mathematics have been obtained and further directions of research developed. In the first several years after their introduction, Leavitt path algebras were considered primarily for countable graphs and, in many cases, just for row-finite graphs, i.e. for graphs with vertices that emit only finitely many edges. Subsequently, some results, previously shown for countable and row-finite graphs, were shown to hold for arbitrary graphs. In this paper, we continue this trend.

If $K$ is a field, and $E$ a row-finite and countable graph in which no cycle has an exit (no-exit graph) and in which infinite paths end in sinks or cycles, then 
the Leavitt path algebra $L_K(E)$ is isomorphic to a direct sum of locally matricial algebras of the form $\M_\kappa(K)$ and $\M_\mu(K[x,x^{-1}])$ where $\kappa$ and $\mu$ are countable cardinals by \cite[Theorem 3.7]{AAPM}. More recently, in \cite{Roozbeh_Ranga}, the authors show that if $E$ is as above but not necessarily countable, then $L_K(E),$ naturally graded by the group of integers $\Zset,$ is graded isomorphic to a direct sum of graded locally matricial algebras of the form $\M_\kappa(K)$ and $\M_\mu(K[x^n,x^{-n}])$ where $\kappa$ and $\mu$ are cardinals and $n$ positive integers. Finally, in \cite{Gonzalo_Mercedes_Brox}, the Leavitt path algebras over arbitrary no-exit graphs have been characterized as directed unions of direct sums of (non-graded) matricial algebras over $K$ and $K[x,x^{-1}].$ We provide a complete characterization of Leavitt path algebras of no-exit graphs as {\em graded} and {\em involutive} algebras without any restriction on the cardinality of the graph or the number of edges which vertices emit (Proposition \ref{ultramatricial_rep}). Therefore, our result is an involutive and graded version of \cite[Theorem 3.2, equivalences (i) and (ii)]{Gonzalo_Mercedes_Brox} and a generalization of \cite[Theorem 6.7, equivalences (2) and (3)]{Roozbeh_Ranga}. 

Moreover, we show that the no-exit graphs are the {\em only} graphs with the Leavitt path algebras (graded) $*$-isomorphic to directed unions of algebras of the form $A_i\oplus B_i$ for some directed set $I,$ (graded) matricial $*$-algebras $A_i$ over $K,$ and (graded) matricial $*$-algebras $B_i$ over $K[x^{n_i},x^{-n_i}]$ for positive integers $n_i$ and $i\in I$ (Corollary \ref{no-exit_corollary}). The Leavitt path algebras of acyclic graphs are the {\em only} Leavitt path algebras which are (graded) $*$-isomorphic to a directed union of (graded) matricial $*$-algebras over $K$ (Corollary \ref{acyclic_corollary}). In addition, the Leavitt path algebras of row-finite, no-exit graphs in which infinite paths end in sinks or cycles are the {\em only} Leavitt path algebras which are (graded) $*$-isomorphic to a direct sum of the (graded) matrix $*$-algebras of the form $\M_\kappa(K)$ and $\M_\mu(K[x^n,x^{-n}])$ where $\kappa$ and $\mu$ are cardinals and $n$ positive integers, and are the {\em only} Leavitt path algebras whose monoid of the isomorphism classes of finitely generated projective modules is atomic and cancellative (Corollaries \ref{no-exit_corollary} and \ref{projectives}). We also show that, surprisingly, not every graded matrix $*$-algebra is graded isomorphic to a Leavitt path algebra (Proposition \ref{realization_question}).

We formulate our results for Cohn-Leavitt algebras (of non-separated graphs) considered in \cite{Ara_Goodearl}. Although this class of algebras is not any larger than the class of Leavitt path algebras, using Cohn-Leavitt algebras has one important advantage over using Leavitt path algebras alone -- a Cohn-Leavitt algebra can be represented as a directed union of the subalgebras over the finite, complete subgraphs. We make use of this fact throughout section \ref{section_direct_limits} and present examples which further illustrate the benefits of using Cohn-Leavitt algebras over using Leavitt path algebras alone.

A prominent direction of research on Leavitt path algebras is the characterization of the ring-theoretic properties
of a Leavitt path algebra $L_K(E)$ in terms of the graph-theoretic properties of the graph $E$, i.e. results of the form 
\begin{center}
{\em $L_K(E)$ has ring-theoretic property $(P)$ if and only if  $E$ has graph-theoretic property $(P').$ }
\end{center}
While relevant in its own right, this line of research has also become a way to create rings with various predetermined properties by selecting suitable graphs. We focus on properties of being noetherian and artinian and their graded generalizations. It is known that $L_K(E)$ is noetherian if and only if $E$ is a finite, no-exit graph and $L_K(E)$ is artinian if and only if $E$ is a finite, acyclic graph. While it is known that $L_K(E)$ is neither noetherian or artinian unless $E$ is finite, it is still of interest to characterize when $L_K(E)$ is ``locally'' noetherian and artinian. With these generalizations appropriately defined, locally noetherian and locally artinian Leavitt path algebras of countable, row-finite graphs have been characterized in  \cite[Theorems 2.4 and 3.7]{AAPM}.

If the graded structure of the algebra is taken into account, the characterization results have the form 
\begin{center}
{\em $L_K(E)$ has graded ring-theoretic property $(P)$ if and only if  $E$ has graph-theoretic property $(P').$ }
\end{center}
Recently, many ring-theoretic properties have been adapted to graded rings, for example, graded von Neumann regular in \cite{Roozbeh_regular},  graded directly finite in \cite{Roozbeh_Lia} and graded Baer in \cite{Roozbeh_Lia_Baer}. We introduce the graded versions of the non-unital generalizations of noetherian and artinian rings, graded locally noetherian and graded locally artinian rings. In Theorems \ref{noetherian} and \ref{artinian}, we characterize graded locally noetherian and graded locally artinian Leavitt path algebras. We also show that the assumptions that the underlying graph is countable and row-finite can be dropped from the characterization of locally noetherian and artinian Leavitt path algebras (\cite[Theorems 2.4 and 3.7]{AAPM}). Our results imply that the property of being locally noetherian is invariant for the graded structure of Leavitt path algebras, while the property of being locally artinian is not, i.e. the following holds for a Leavitt path algebra $L_K(E)$. 
\begin{center}
\begin{tabular}{l}
{\em $L_K(E)$ is graded locally noetherian if and only if $L_K(E)$ is locally noetherian}\hskip.4cm while \\
{\em $L_K(E)$ can be graded locally artinian without being locally artinian.}
\end{tabular}
\end{center}
 
The paper is organized as follows. Section \ref{section_preliminaries} contains some preliminaries. Section \ref{section_direct_limits} contains Proposition \ref{ultramatricial_rep} with the structural characterization of Cohn-Leavitt algebras of no-exit objects as graded and involutive algebras and its corollaries, Corollaries \ref{no-exit_corollary}, \ref{acyclic_corollary}, \ref{comet_corollary}, and \ref{projectives}, as well as a negative answer to Realization Question, Proposition \ref{realization_question}. In sections \ref{section_noetherian} and \ref{section_artinian}, we introduce the graded versions of locally noetherian and locally artinian rings, prove some of their properties, and characterize Leavitt path algebras which are graded locally noetherian and graded locally artinian as well as locally noetherian and locally artinian without any restriction on the cardinality of the graph (Theorems \ref{noetherian} and \ref{artinian}).

\section{Preliminaries}\label{section_preliminaries}

In this section, we recall some concepts and prove some preliminary results. 

\subsection{Graded rings} 
If $\Gamma$ is an abelian group, a ring $R$ is a \emph{$\Gamma$-graded ring} if $R=\bigoplus_{ \gamma \in \Gamma} R_{\gamma}$ such that each $R_{\gamma}$ is
an additive subgroup of $R$ and $R_{\gamma}  R_{\delta} \subseteq R_{\gamma + \delta}$ for all $\gamma, \delta \in \Gamma$. If it is clear from the context what the group $\Gamma$ is, such ring $R$ is said to be a graded ring. The elements of $\bigcup_{\gamma \in \Gamma} R_{\gamma}$ are the \emph{homogeneous elements} of $R.$ If $R$ is an algebra over a field $K$, then $R$ is a \emph{graded algebra} if $R$ is a graded ring and $R_{\gamma}$ is a $K$-vector subspace for any $\gamma \in \Gamma$.

A $\Gamma$-graded ring $R$ is \emph{trivially graded} if $R_0=R$ and $R_\gamma=0$ for $0\neq\gamma \in \Gamma$. Note that any ring can be trivially graded by any abelian group.

For a $\Gamma$-graded ring $R$ and $(\gamma_1,\dots,\gamma_n)$ in $\Gamma^n$, $\M_n(R)(\gamma_1,\dots,\gamma_n)$ denotes the ring of matrices $\M_n(R)$ and the $\Gamma$-grading such that the $\delta$-component consists of the matrices $(r_{ij})\in\M_n(R)$ such that $r_{ij}\in R_{\delta+\gamma_j-\gamma_i}$ for $i,j=1,\ldots, n$ (more details in~\cite[Section 1.3]{Roozbeh_graded_ring_notes}). 

A \emph{graded left $R$-module} is a left $R$-module $M$ with a direct sum decomposition $M =\bigoplus_{\gamma\in\Gamma} M_\gamma$ where $M_\gamma$
is an additive subgroup of $M$ such that $ R_\gamma M_\delta \subseteq M_{\gamma+\delta}$ for all $\gamma,\delta\in \Gamma$. In this case, for $\delta\in\Gamma,$ the
$\delta$-\emph{shifted} graded left $R$-module $M(\delta)$ is defined as $M(\delta) =\bigoplus_{\gamma \in \Gamma} M(\delta)_\gamma,$  where $M(\delta)_\gamma = M_{\delta+\gamma}.$ A left $R$-module homomorphism $f$ of graded left $R$-modules $M$ and $N$ is a {\em graded homomorphism} if $f(M_\gamma)\subseteq N_\gamma$
for any $\gamma\in \Gamma$. 

A \emph{graded left ideal} of $R$ is a left ideal $I$ such that $I =\bigoplus_{\gamma\in\Gamma} I\cap R_\gamma.$ A left ideal $I$ of $R$ is a graded left ideal if and only if $I$ is generated by homogeneous elements. Graded right modules and graded right ideals are defined similarly. A graded ideal is a graded left and a graded right ideal.  

\subsection{Graded rings with involution} 
A ring $R$ with an involution $*$ (an anti-automorphism of order two) is said to be an {\em involutive ring} or a {\em $*$-ring}.
If a $*$-ring $R$ is also a $K$-algebra for some commutative, involutive ring $K,$ then $R$ is a {\em
$*$-algebra} if $(kr)^*=k^*r^*$ for $k\in K$ and $r\in R.$

In \cite{Roozbeh_Lia}, a $\Gamma$-graded ring $R$ with involution is said to be a \emph{graded $*$-ring} if $R_\gamma ^*\subseteq R_{-\gamma}$ for every $\gamma\in \Gamma.$ A graded ring homomorphism $f$ of graded $*$-rings $R$ and $S$ is a {\em graded $*$-homomorphism} if $f(r^*)=f(r)^*$ for every $r\in R.$ 
If $R$ is a graded $*$-ring, the \emph{$*$-transpose} $(r_{ij})^*=(r_{ji}^*)$, for $(r_{ij}) \in \M_n(R)(\gamma_1,\dots,\gamma_n)$, gives the structure of a graded $*$-ring to $\M_n(R)(\gamma_1,\dots,\gamma_n).$ A \emph{graded matricial $*$-algebra over $R$} is a finite direct sum of graded matrix algebras of the form $\M_{n}(R)(\overline \gamma)$  for $\overline \gamma\in  
\Gamma^{n}$ where the involution is the $*$-transpose in each coordinate.  

Let $\kappa$ be a cardinal and $R$ a ring. We let $\M_\kappa(R)$ denote the algebra of matrices over $R$, having rows and columns indexed by $\kappa$ and with only finitely many nonzero entries. If $\Gamma$ is an abelian group, $R$ a $\Gamma$-graded $*$-ring and $\ol \gamma: \kappa\to \Gamma$ any function, we let $\M_\kappa(R)(\ol \gamma)$ denote the $\Gamma$-graded ring $\M_\kappa(R)$ with the $\delta$-component consisting of the matrices $(r_{ij}),$ $i,j\in \kappa,$ such that $r_{ij}\in R_{\delta+\ol \gamma(j)-\ol\gamma(i)}.$ The graded ring $\M_\kappa(R)(\ol \gamma)$ is a graded $*$-ring with the $*$-transpose involution.

\subsection{Leavitt path and Cohn-Leavitt algebras.}
Let $E=(E^0, E^1, \so, \ra)$  be a (directed) graph where $E^0$ is the set of vertices, $E^1$ the set of edges, and $\so_E, \ra_E: E^1
\to E^0$ are the source and the range maps. If it is clear from the context, we write $\so_E$ and $\ra_E$ shorter as $\so$ and $\ra.$

A vertex $v$ of a graph $E$ is said to be {\em regular} if $\so^{-1}(v)$ is nonempty and finite. A vertex $v$ is a {\em sink} if $\so^{-1}(v)$ is empty and a {\em bifurcation} if $\so^{-1}(v)$ has at least two elements. A graph $E$ is \emph{row-finite} if sinks are the only vertices which are not regular,
\emph{finite} if $E$ is row-finite and $E^0$ is finite (in which case $E^1$ is necessarily finite as well), and {\em countable} if both $E^0$ and $E^1$ are countable. 

A {\em path} $p$ of $E$ is a finite sequence of edges $p=e_1\ldots e_n$ such that $\ra(e_i)=\so(e_{i+1})$ for $i=1,\dots,n-1$. Such
path $p$ has length $|p|=n.$  The maps $\so$ and $\ra$ extend to paths by $\so(p)=\so(e_1)$ and $\ra(p)=\ra(e_n)$. A vertex $v$ is a \emph{trivial} path of length zero with $\so(v)=\ra(v)=v$. 
A path $p = e_1\ldots e_n$ is \emph{closed} if $\so(p)=\ra(p)$. If $p=e_1\ldots e_n$  is a closed path and $\so(e_i) \neq \so(e_j)$ for all $i \neq j$, then $p$ is a \emph{cycle}. A cycle of length one is a {\em loop}. A graph $E$ is {\em no-exit} if no vertex of any cycle is a bifurcation. 

An infinite path of a graph $E$ is a sequence of edges $e_1e_2\ldots$ such that $\ra(e_i)=\so(e_{i+1})$ for $i=1,2,\ldots$. An infinite path is an \emph{infinite sink} if none of its vertices are bifurcations or in a cycle. An infinite path \emph{ends in a sink} if there is a positive integer $n$ such that the subpath $e_ne_{n+1}\hdots$ is an infinite sink. An infinite path \emph{ends in a cycle} if there is a positive integer $n$ such that the subpath $e_ne_{n+1}\hdots$ is equal to the path $cc\hdots$ for some cycle $c.$ 

Extend a graph $E$ by the new edges $\{e^*\ |\ e\in E^1\}$ such that $\so(e^*)=\ra(e)$  and $\ra(e^*)=\so(e)$ for all edges $e.$ Extend the map $^*$ to paths by defining $v^*=v$ for vertices $v$ and $(p)^*=e_n^*\ldots e_1^*$ for paths $p=e_1\ldots e_n, e_i\in E^1, i=1,\ldots,n$ and extend $\so$ and $\ra$ by $\so(p^*)=\ra(p)$ and $\ra(p^*)=\so(p)$.  

For a graph $E$ and a field $K$, the \emph{Leavitt path algebra} $L_K(E)$ of $E$ over $K$ is the free $K$-algebra generated by the set  $E^0\cup E^1\cup\{e^*\ |\ e\in E^1\}$ such that for all vertices $v,w$ and edges $e,f,$
\begin{itemize}
\item[(V)]  $vw =0$ if $v\neq w$ and $vv=v,$

\item[(E1)]  $\so(e)e=e\ra(e)=e,$

\item[(E2)] $\ra(e)e^*=e^*\so(e)=e^*,$

\item[(CK1)] $e^*f=0$ if $e\neq f$ and $e^*e=\ra(e),$

\item[(CK2)] $v=\sum_{e\in \so^{-1}(v)} ee^*$ for each regular vertex $v.$
\end{itemize}

The \emph{Cohn path algebra $C_K(E)$ of $E$ over $K$} is the free $K$-algebra generated by $E^0\cup E^1\cup \{e^*\ |\ e\in E^1\}$ such that (V), (E1), (E2), and (CK1) axioms hold for all vertices $v,w$ and edges $e,f$.

The Cohn-Leavitt algebras are obtained from Cohn path algebras by requiring the (CK2) axiom to hold just for a portion of regular vertices, not necessarily all of them. Thus, the Cohn-Leavitt algebras of $E$ over $K$ can be considered to be ``$C_K(E),$ $L_K(E)$ and everything in between''. More precisely, if $S$ is a subset of regular vertices, the {\em Cohn-Leavitt algebra $CL_K(E,S)$ of $E$ and $S$ over $K$}  is the free $K$-algebra generated by the sets $E^0\cup E^1\cup \{e^*\ |\ e\in E^1\}$ subject to relations (V), (E1), (E2), (CK1) for all vertices $v,w$ and edges $e,f$ and 
\begin{itemize}
\item[(SCK2)] $v=\sum_{e\in \so^{-1}(v)} ee^*,$ for every vertex $v\in S$.
\end{itemize}

The Cohn-Leavitt algebra $CL_K(E,\varnothing)$ is a Cohn path algebra and we write $CL_K(E, \varnothing)$ as $C_K(E).$  The algebra $CL_K(E,R(E))$ is a Leavitt path algebra and we write $CL_K(E, R(E))$ as $L_K(E).$   

The axioms (V), (E1), (E2), (CK1)  imply that every element of $CL_K(E,S)$ can be represented as a sum of the form $\sum_{i=1}^n a_ip_iq_i^*$ for some $n$, paths $p_i$ and $q_i$, and elements $a_i\in K,$ for $i=1,\ldots,n.$ By the same axioms, $CL_K(E, S)$ is a unital ring if and only if $E^0$ is finite (in this case the identity is the sum of elements of $E^0$). If $E^0$ is not finite, the finite sums of distinct vertices are local units of $CL_K(E).$ 

If $K$ is a field with involution $*$ (and there is always at least one such involution, the identity), $CL_K(E,S)$ becomes a $*$-algebra by 
$\left(\sum_{i=1}^n a_ip_iq_i^*\right)^* =\sum_{i=1}^n a_i^*q_ip_i^*$ for $a_i\in K$ and paths $p_i, q_i, i=1,\ldots, n.$
The algebra  $CL_K(E,S)$ is also naturally graded by $\Zset$ so that the $n$-component is  
\[CL_K(E, S)_n=  \Big \{ \sum_i a_i p_i q_i^*\mid p_i, q_i \textrm{ are paths}, a_i \in K, \textrm{ and } |p_i|-|q_i|=n \textrm{ for all } i \Big\}.\]
Since $CL_K(E, S)_n^*=CL_K(E, S)_{-n}$ for every $n\in\Zset,$ $CL_K(E, S)$ is a graded $*$-algebra. The field $K$ is assumed to be trivially graded by $\Zset.$ We keep this assumption throughout the paper. 

In \cite{Ara_Goodearl}, Ara and Goodearl introduced the Cohn-Leavitt algebras for a class of separated graphs, which is a strictly larger class of graphs than the class of directed graphs. A directed graph is consider to be trivially separated. The $C^*$-analog of Cohn-Leavitt algebras over trivially separated graphs, 
the {\em relative graph $C^*$-algebra} $C^*(E,S)$ of a graph $E$ and $S\subseteq R(E)$, was introduced in \cite{Muhly_Tomforde}. 

Based on the definitions alone, one could suspect that the class of Cohn-Leavitt algebras of trivially separated graphs is strictly larger than the class of Leavitt path algebras. However, that is not the case: there is a canonical $*$-isomorphism of $CL_K(E, S)$ and $L_K(E_S)$ for a suitable graph $E_S$ defined via $E$ and $S$ (see \cite[Theorem 3.7]{Muhly_Tomforde} for $E$ countable and \cite[Lemma 4.8]{Lia_Traces} for any $E$). 

Although the classes of Leavitt and Cohn-Leavitt algebras are the same, the consideration of Cohn-Leavitt algebras has (at least) one significant advantage over the consideration of Leavitt path algebras alone: every Leavitt path algebra is a direct limit of Cohn-Leavitt algebras of certain finite subgraphs {\em with injective connecting maps} by \cite[Proposition 3.6]{Ara_Goodearl}. This fact enables one to transfer the consideration of certain properties of algebra to subalgebras corresponding to {\em finite} graphs. Using Leavitt path algebras alone, such direct limit representation is not always possible.   

We start by showing that some results of \cite{Ara_Goodearl} and \cite{Lia_Traces}, formulated for non-graded algebras, continue to hold in the category of graded involutive algebras as well. 

Let $E$ be a graph with $S\subseteq R(E)$ and $F$ a subgraph of $E$ with $T\subseteq R(F).$ We say that $(F, T)$ is a {\em complete subobject} of
$(E,S)$ if $T\subseteq S$ and the following holds.
\begin{enumerate}
\item[(C)] If $v\in S\cap F^0$ is such that  $\so^{-1}_E(v)\cap F^1\neq \varnothing,$ then $\so^{-1}_F(v)=\so^{-1}_E(v)$ and $v\in T.$
\end{enumerate}
Note that the conditions $T\subseteq S$ and (C) imply that $T=S\cap\{v\in F^0\, |\, \so^{-1}_E(v)\cap F^1\neq \varnothing\}.$

If $T=R(F)$ and $S=R(E),$ this agrees with the definitions of a complete subgraph for row-finite graphs from \cite[Section 3]{Ara_Moreno_Pardo} and for countable graphs from \cite[Definition 9.7]{Abrams_Tomforde}. In this case, the inclusion of $F$ into $E$ induces a {\em graded} homomorphism as pointed out in \cite[\S 2]{Roozbeh_Israeli}.

In \cite[Proposition 3.5]{Ara_Goodearl} it is shown that for a finite subgraph $G$ of a graph $E$ and any $S\subseteq R(E),$ there is a complete subobject $(F,T)$ of $(E,S)$ such that $F$ is finite and $G$ is a subgraph of $F.$ In the case when $S=R(E),$ $E$ has an infinite emitter $v$ and $G$ is a subgraph consisting of $v$ with finitely many edges $v$ emits together their ranges, such complete subobject $(F,T)$ is with $v\in R(F)$ but $v\notin T.$ Thus $CL_K(F,T)$ is a $K$-subalgebra of $L_K(E)$ while $L_K(F)$ is not. Cases like this one highlight advantages of working with Cohn-Leavitt algebras instead of Leavitt path algebras alone. 

Consider the category $\G$ whose objects are pairs $(E, S),$ where $E$ is a graph and $S\subseteq R(E)$ and whose morphisms $(E, S)\to (F, T)$ are graph morphisms 
$f:E\to F$ such that (1) $f$ is a graph morphisms which is injective on the set of vertices, (2) $v$ emits edges if and only if $f(v)$ emits edges for every $v\in E^0$ and $f$ is injective on $\so_E^{-1}(v)$ for every $v\in E^0$ which emits edges, and (3) $f$ maps $S$ into $T$ and, if $v\in S,$ then $f$ maps $\so_E^{-1}(v)$ bijectively onto $\so_F^{-1}(f(v)).$ 

By \cite[Proposition 3.3]{Ara_Goodearl}, the category $\G$ admits arbitrary direct limits. The following proposition, which is the graded version of \cite[Proposition 3.6]{Ara_Goodearl} for trivially separated graphs, holds since all the maps defined in the proof of \cite[Proposition 3.6]{Ara_Goodearl} preserve the degrees of elements and, hence, are graded homomorphisms. 

\begin{proposition} The assignment $(E, S)\to CL_K(E, S)$ extends to a continuous functor $CL_K$ from the category $\G$ to the category of graded $*$-algebras over $K$. Moreover, every algebra $CL_K(E, S)$ is a direct limit, with injective connecting maps, of the algebras $CL_K(F, T)$ where the direct limit is taken over all objects $(F, T)$ of the directed system of finite complete subobjects of $(E, S).$ 
\label{dir_lim_of_fin_subobject}
\end{proposition}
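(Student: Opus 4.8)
The plan is to reduce the statement to the two assertions it contains: (i) the assignment $(E,S)\mapsto CL_K(E,S)$ is functorial and continuous from $\G$ to graded $*$-algebras over $K$, and (ii) every $CL_K(E,S)$ is the direct limit, with injective connecting maps, of the $CL_K(F,T)$ over the directed system of finite complete subobjects. For (i), I would start from the non-graded functor of \cite[Proposition 3.6]{Ara_Goodearl}, which already produces, for a morphism $f\colon(E,S)\to(F,T)$ in $\G$, an algebra homomorphism $CL_K(f)\colon CL_K(E,S)\to CL_K(F,T)$ sending $v\mapsto f(v)$, $e\mapsto f(e)$, $e^*\mapsto f(e)^*$. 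The only new things to check are that each $CL_K(f)$ is a graded map and a $*$-map. Both are immediate from the generator-level description: a generator $v$ has degree $0$, $e$ has degree $1$, $e^*$ has degree $-1$, and $f$ preserves these, so $CL_K(f)$ sends the $n$-component into the $n$-component; similarly $CL_K(f)((\sum a_ip_iq_i^*)^*)=CL_K(f)(\sum a_i^*q_ip_i^*)$ equals $CL_K(f)(\sum a_ip_iq_i^*)^*$ because $f$ commutes with $^*$ on generators and the involution is defined by the same formula in both algebras. Continuity (preservation of direct limits) then follows because the underlying functor is continuous by \cite[Proposition 3.6]{Ara_Goodearl} and the grading/involution on a direct limit of graded $*$-algebras is the one induced coordinatewise, so a compatible family of graded $*$-maps out of the $CL_K(E_i,S_i)$ assembles to a graded $*$-map out of the limit.

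For (ii), I would invoke \cite[Proposition 3.5]{Ara_Goodearl} (and the excerpt's discussion following it): every finite subgraph $G$ of $E$ is contained in a finite complete subobject $(F,T)$ of $(E,S)$, so the finite complete subobjects form a directed system whose colimit in $\G$ is $(E,S)$ itself. Applying the functor $CL_K$ and using its continuity from step (i) gives $CL_K(E,S)=\varinjlim CL_K(F,T)$ as graded $*$-algebras. That the connecting maps are injective is the content of \cite[Proposition 3.6]{Ara_Goodearl}: the inclusion of a complete subobject induces an injective homomorphism, and a direct limit of injective maps along a directed set has injective canonical maps from each term — so in particular the transition maps $CL_K(F,T)\to CL_K(F',T')$ for $(F,T)\subseteq(F',T')$ are injective. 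Since these maps are also graded $*$-homomorphisms by step (i), the direct limit is one of graded $*$-algebras with injective connecting maps, as claimed.

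The main point requiring care — really the only nontrivial one — is verifying that the maps constructed in the proof of \cite[Proposition 3.6]{Ara_Goodearl} genuinely preserve degrees, which as the excerpt already observes amounts to checking they act on the homogeneous generators $v,e,e^*$ in a degree-preserving way; once that is granted, everything else is formal bookkeeping transporting the graded and involutive structure through a colimit. I would therefore present the proof as: recall the non-graded functor and its key properties, observe on generators that each induced map is graded and $*$-preserving, conclude continuity, and then specialize to the directed system of finite complete subobjects, citing \cite[Propositions 3.5 and 3.6]{Ara_Goodearl} for the cofinality and injectivity statements.
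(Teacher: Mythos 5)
Your proposal is correct and follows essentially the same route as the paper: the paper also obtains the result by citing \cite[Propositions 3.5 and 3.6]{Ara_Goodearl} and observing that the maps constructed there send the homogeneous generators $v$, $e$, $e^*$ in a degree-preserving (and $*$-compatible) way, hence are graded $*$-homomorphisms, with injectivity and the directed system of finite complete subobjects inherited directly from the non-graded statement. Your write-up merely spells out this degree/involution check and the formal colimit bookkeeping in more detail than the paper does.
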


Next, we briefly review the construction of the {\em relative graph} $E_S$ of $E$ with respect to $S$ from \cite[Theorem
3.7]{Muhly_Tomforde}. The graph $E_S$ is defined by 
\begin{align*}
E^0_S & =E^0\cup\{v' \,|\, v\in R(E)- S\}\\
E^1_S & =E^1\cup \{e' \,|\, e\in E^1\mbox{ with }\ra(e)\in R(E)- S\}
\end{align*}
and by letting the maps $\so$ and $\ra$ in $E_S$ be the same as in $E$ on $E^1$ and such that $\so(e')=\so(e)$ and
$\ra(e')=\ra(e)'$ for any added edge $e'.$ 

Define a map $\phi_{E,S}$ on $E_S^0$ by $\phi_{E,S}(v)=v$ if $v\notin R(E)- S$,
$\phi_{E,S}(v)=\sum_{e\in \so^{-1}(v)} ee^*$ and
$\phi_{E,S}(v')=v-\sum_{e\in \so^{-1}(v)} ee^*$ if $v\in R(E)- S.$
Define $\phi_{E,S}$ on $E_S^1$ by $\phi_{E,S}(e)=e\phi_{E,S}(\ra(e))$ for $e\in E^1$ and
$\phi_{E,S}(e')=e\phi_{E,S}(\ra(e)')$ for $e\in E^1$ such that $\ra(e)\in R(E)- S.$ If  $f\in E_S^1,$ let 
$\phi_{E,S}(f^*)=\phi_{E,S}(f)^*.$  

It can be directly checked that the map $\phi_{E,S}$ is such that the images $\phi_{E,S}(w)$, $\phi_{E,S}(f),$ and $\phi_{E,S}(f^*)$ for $w\in E^0_S$ and $f\in E^1_S$ satisfy (V), (E1), (E2), (CK1), and (CK2). Thus, $\phi_{E,S}$ uniquely extends to a $K$-algebra $*$-homomorphism of $L_K(E_S)$ to $CL_K(E,S)$ by the universal property of Leavitt path algebras (see \cite[Lemma 4.7]{Lia_Traces} for the involutive version of this property). Moreover, since the map $\phi_{E,S}$ respects the grading on vertices, edges and ghost edges, its extension is a graded homomorphism which is injective by the Graded Uniqueness Theorem (more details in \cite[Lemma 4.8]{Lia_Traces}). This map is onto by \cite[ Theorem 3.7]{Muhly_Tomforde} or \cite[Lemma 4.8]{Lia_Traces}.

Moreover, the map $\phi_{E,S}$ is canonical. Indeed, if $f$ is a graded $*$-homomorphism of algebras $f: CL_K(E,S)\to CL_K(F, T),$ 
one can check that the map $\overline f$ defined on $E_S^0$ and $E_S^1$ by
\[\begin{array}{lll}
v\mapsto &\phi^{-1}_{F,T}f(\phi_{E,S}(v)) & \mbox{ for }v\in E^0\\
v'\mapsto &\phi^{-1}_{F,T}(f(v)-f(\phi_{E,S}(v)))  &\mbox{ for }v\in R(E)-S\\
e \mapsto & \phi^{-1}_{F,T}(f(e)f(\phi_{E,S}(\ra(e))))  & \mbox{ for }e\in E^1\\
e'\mapsto &\phi^{-1}_{F,T}(f(e)f(\phi_{E,S}(\ra(e)'))) & \mbox{ for }e\in E^1\mbox{ with }\ra(e)\in R(E)-S
\end{array}
\]
extends to a graded $*$-homomorphism $\ol f: L_K(E_S)\to L_K(F_T).$ Thus, we have the following lemma. 

\begin{lemma}\label{star_iso}
The map $\phi_{E,S}: L_K(E_S)\cong_{\gr} CL_K(E,S)$ is a canonical graded $*$-isomorphism. 
\end{lemma}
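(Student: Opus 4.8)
The plan is to verify that the map $\phi_{E,S}$ described just before the statement is a well-defined graded $*$-homomorphism, then that it is bijective, and finally that it is canonical in the precise sense that the diagram relating $\phi_{E,S}$ and $\phi_{F,T}$ commutes for every graded $*$-homomorphism $CL_K(E,S)\to CL_K(F,T)$. Since most of this is already sketched in the paragraphs preceding the statement, the proof is mainly a matter of organizing these observations and pointing to the appropriate references.

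\begin{proof}
First I would check that $\phi_{E,S}$ is a well-defined $K$-algebra $*$-homomorphism. By the universal property of Leavitt path algebras in its involutive form (\cite[Lemma 4.7]{Lia_Traces}), it suffices to verify that the images $\phi_{E,S}(w)$, $\phi_{E,S}(f)$, $\phi_{E,S}(f^*)$ for $w\in E^0_S$ and $f\in E^1_S$ satisfy the relations (V), (E1), (E2), (CK1), and (CK2) in $CL_K(E,S)$. This is a direct computation: the key points are that $\{\phi_{E,S}(v), \phi_{E,S}(v')\}$ for $v\in R(E)-S$ are orthogonal idempotents summing to $v$, that $\phi_{E,S}$ preserves sources and ranges by construction, and that (CK2) holds at each $v\in R(E)-S$ in $E_S$ precisely because $\phi_{E,S}(v)=\sum_{e\in\so^{-1}(v)}ee^*$ absorbs the new edges $e'$ correctly; for $v\in S$ the (CK2) relation in $CL_K(E,S)$ is exactly what is needed. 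Since $\phi_{E,S}$ sends vertices and ghost vertices to elements of degree $0$, edges to elements of degree $1$, and ghost edges to elements of degree $-1$, the extension respects the $\Zset$-grading, so $\phi_{E,S}$ is a graded $*$-homomorphism.

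Next I would establish bijectivity. Injectivity follows from the Graded Uniqueness Theorem: $\phi_{E,S}$ does not vanish on any vertex of $E_S$ (each $\phi_{E,S}(v)$ and each $\phi_{E,S}(v')$ is a nonzero idempotent, being a sum or a complementary difference of orthogonal nonzero idempotents in $CL_K(E,S)$), and $\phi_{E,S}$ is graded, so the Graded Uniqueness Theorem applies and gives injectivity. Surjectivity is immediate because the generators of $CL_K(E,S)$ lie in the image: every vertex $v$ equals $\phi_{E,S}(v)$ if $v\notin R(E)-S$ and equals $\phi_{E,S}(v)+\phi_{E,S}(v')$ otherwise, every edge $e$ equals $\phi_{E,S}(e)+\phi_{E,S}(e')$ (or $\phi_{E,S}(e)$ when $\ra(e)\notin R(E)-S$), and similarly for ghost edges; alternatively one cites \cite[Theorem 3.7]{Muhly_Tomforde} or \cite[Lemma 4.8]{Lia_Traces}. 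Hence $\phi_{E,S}$ is a graded $*$-isomorphism.

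Finally, for canonicity I would take any graded $*$-homomorphism $f:CL_K(E,S)\to CL_K(F,T)$ and show that the assignment $\ol f$ displayed before the statement is well-defined and extends to a graded $*$-homomorphism $\ol f:L_K(E_S)\to L_K(F_T)$ satisfying $\phi_{F,T}\circ\ol f=f\circ\phi_{E,S}$. Well-definedness of $\ol f$ again reduces, via the universal property, to checking that the images of the vertices, edges, and ghost edges of $E_S$ satisfy (V), (E1), (E2), (CK1), (CK2) in $L_K(F_T)$; this is formal once one uses that $\phi_{F,T}$ is an isomorphism (so $\phi^{-1}_{F,T}$ makes sense) and that $f$ is a graded $*$-homomorphism. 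The identity $\phi_{F,T}\circ\ol f=f\circ\phi_{E,S}$ can then be verified on the generators $v,v',e,e'$ of $L_K(E_S)$ directly from the defining formulas for $\ol f$ and $\phi_{E,S}$, $\phi_{F,T}$. The main obstacle is bookkeeping rather than conceptual: one must keep careful track of the two copies of the relative-graph construction and of the places where one inverts $\phi_{F,T}$, making sure every case ($v\in S$ versus $v\in R(E)-S$, and $\ra(e)\in S$ versus $\ra(e)\in R(E)-S$) is handled. None of these steps introduces anything genuinely new beyond the non-graded arguments of \cite{Muhly_Tomforde} and \cite{Lia_Traces}; the only additional observation, already noted in the text, is that every map in sight preserves degrees.
\end{proof}
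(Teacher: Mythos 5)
Your proposal is correct and follows essentially the same route as the paper: the paper's argument (given in the paragraphs preceding the lemma) likewise extends $\phi_{E,S}$ via the involutive universal property, gets injectivity from the Graded Uniqueness Theorem and surjectivity from \cite[Theorem 3.7]{Muhly_Tomforde} or \cite[Lemma 4.8]{Lia_Traces}, and establishes canonicity through the induced map $\ol f$ on $L_K(E_S)$. No substantive differences to report.
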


\section{Involutive and graded structure of Leavitt path algebras of no-exit graphs}\label{section_direct_limits}

In this section, we characterize Cohn-Leavitt algebras of no-exit graphs as {\em graded} and {\em involutive} algebras without any restriction on the cardinality of the graph or the infinite paths (Proposition \ref{ultramatricial_rep}). Our result is an involutive and graded version of \cite[Theorem 3.2, equivalences (i) and (ii)]{Gonzalo_Mercedes_Brox} and a generalization of \cite[Theorem 6.7, equivalences (2) and (3)]{Roozbeh_Ranga}. Corollaries of this result include graph-theoretic conditions characterizing when a Leavitt path algebra is a directed union of (graded) matricial algebras over the underlying field and over the algebra of Laurent polynomials and when the monoid of isomorphism classes of finitely generated projective modules is atomic and cancellative.

\subsection{Benefits of using Cohn-Leavitt algebras}\label{subsection_necessity}
By \cite[Proposition 5.1]{Roozbeh_Lia}, if $E$ is a row-finite, no-exit graph in which each infinite path ends in a sink or a cycle and $K$ is any field, $L_K(E)$ is graded $*$-isomorphic to the algebra 
$$\bigoplus_{i\in I} \M_{\kappa_i} (K)(\ol\alpha^i) \oplus \bigoplus_{j \in J} \M_{\mu_j} (K[x^{n_j},x^{-n_j}])(\ol\gamma^j)$$
where $I$ indexes the set of sinks, $J$ indexes the set of cycles, $\kappa_i$ is the number of paths ending in a sink indexed by $i\in I,$ $\mu_j$ the number of paths ending in a fixed but arbitrary vertex of the cycle indexed by $j\in J,$ and $n_j$ corresponds to the length of the $j$-th cycle, $j\in J.$ A bit of care is needed when dealing with infinite sinks and \cite[Section 5.2]{Roozbeh_Lia} contains more details as well as examples. 

The $k$-th entry of the shift $\ol\alpha^i\in \Zset^{\kappa_i},$ for $k\in \kappa_i,$ $i\in I,$ corresponds to the length of the $k$-th path ending in the $i$-th sink.  The $k$-th entry of the shift $\ol\gamma^j\in \Zset^{\mu_j},$ for $k\in \mu_j,$ $j\in J,$ corresponds to the length of the $k$-th path ending in a fixed (but arbitrary) vertex of the $j$-th cycle. A bit more care is needed when handling infinite sinks and \cite[Section 5.2]{Roozbeh_Lia} contains more details. 

Before proving any results, we consider the following example. It illustrates that the use of Cohn-Leavitt algebras is necessary when representing a Leavitt path algebra $L_K(E)$ as a direct limit of subalgebras of its finite subgraphs in the case when $E$ fails to be row-finite. 

\begin{example}
Let $E$ be the graph 
\[\xymatrix{
\bullet^{u_2} & \bullet^{u_3} & \circ\\
\bullet^{u_1}  & \bullet_v \ar[l]^{e_1} \ar[ul]^{e_2} \ar[u]^{e_3} \ar@{.>}[ur] \ar@{.>}[r]  &  \circ   \\
}\]
with one source $v$ emitting infinitely many edges $e_1,e_2,\ldots$ with $e_n$ ending in a sink $u_n,$ $n=1,2,\ldots$ as in the figure above. Note that $E$ has no regular vertices so every subgraph $F$ is such that $(F, \varnothing)$ is a complete subobject of $(E, \varnothing).$ Let $F_n, n=1,2,\ldots$ denote the family of subgraphs 
\[\xymatrix{
\bullet^{u_2}& \circ& \bullet_{u_n}\\
\bullet^{u_1}& \bullet_v \ar[l]^{e_1} \ar[ul]^{e_2} \ar@{.>}[u]\ar[ur]_{e_n}  &  
}\]
generated by the edges $e_1, e_2,\ldots, e_n.$ Then $(F_n, \varnothing)$ is a complete subobject of both  $(F_{n+1}, \varnothing)$ and $(E, \varnothing).$ The algebra  $C_K(F_n)$ is graded $*$-isomorphic to $\M_2(K)(0,1)^n\oplus K.$ Indeed, if $e_{jl}^i, j,l=1,2,$  denote the standard matrix units in the $i$-th copy of the matricial algebra  $\M_2(K)(0,1)^n\oplus K,$ for $i=1,\ldots, n,$ then the map below induces a graded $*$-isomorphism.  
\[u_i\mapsto (e_{11}^i, 0), \;\; e_i\mapsto (e_{21}^i, 0),\; i=1,\ldots, n,\;\;\;\;\;\; v\mapsto (\sum_{i=1}^n e_{22}^i, 1)\]
Thus, the element $(0,1)$ corresponds to $v-\sum_{i=1}^n e_ie_i^*.$

Let $\phi_{n(n+1)}: C_K(F_n)\to C_K(F_{n+1})$ denote the graded $*$-monomorphism induced by the inclusion $(F_n, \varnothing)\to (F_{n+1}, \varnothing),$ and $R$ denote the direct limit of the system $\{(C_K(F_n), \phi_{n(n+1)})\mid n=1,2\ldots\}$ with the translational maps $\phi_n: C_K(F_n)\to R.$
Also, let $\psi_n: C_K(F_n)\to C_K(E)=L_K(E)$ denote the graded $*$-monomorphism induced by the inclusion   $(F_n, \varnothing)\to (E, \varnothing).$ Since 
$\psi_n=\phi_{n(n+1)}\psi_{n+1},$ there is a graded $*$-monomorphism $\psi: R\to L_K(E)$ such that $\psi_n=\psi\phi_n$ by the universal property of the direct limit. 

The inverse $\phi$ of the map $\psi$ can be obtained by noting that the correspondence 
\[u_n\mapsto \phi_n(u_n), v\mapsto v=\phi_n(v), e_n\mapsto \phi_n(e_n),\text{ for }n=1,2,\ldots\]
defines a universal Cuntz-Krieger $(E, \varnothing)$-family and, hence, a graded $*$-homomorphism $\phi: L_K(E)\to R.$ 
The relation $\phi\psi_n=\phi_n$ holds by definition on $F_n^0$ and $F_n^1$ and, consequently, $\phi\psi_n=\phi_n$ holds on $C_K(F_n)$ for every $n.$ This implies that $\phi\psi$ is the identity on $R.$ For the converse, note that if $x$ is any of $u_i, v,$ or $e_i$ for $i=1,2,\ldots, n,$ for any $n$, then $\psi(\phi(x))=\psi(\phi_n(x))=\psi_n(x)=x.$ This implies that $\psi\phi$ is the identity. 

Using a matricial representation of $C_K(F_n),$ the algebra $R$ can be identified with the graded $*$-algebra $\varinjlim_n \left(\M_2(K)(0,1)^n\oplus K\right)$ 
with the connecting maps given by $(x,0)\mapsto (x, 0)$ for $x\in \M_2(K)(0,1)^n$ and $(0,1)\mapsto (e^{n+1}_{22}, 1)$ and we have the following commutative diagram. 
{\small
\[
\xymatrix{
{\ldots} \ar[r] &  {C_K(F_n)}\ar[r]\ar[d]^{\cong_{\gr}}  & {C_K(F_{n+1}) }\ar[d]^{\cong_{\gr}}\ar[r] & {\ldots}\ar[r] & {\;L_K(E)\;} \ar[d]^{\cong_{\gr}} \\ 
{\ldots} \ar[r] & {\M_2(K)(0,1)^n\oplus K}\ar[r]        & {\M_2(K)(0,1)^{n+1}\oplus K}\ar[r]\ar[r] & {\ldots}\ar[r] & {\varinjlim_n\left(\M_2(K)(0,1)^n\oplus K\right)} }
\]}
Note that the algebra $L_K(E)$ cannot be represented as a directed union of Leavitt path algebras of any of its finite, complete subgraphs. 
Note also that the graph $E$ is acyclic. Thus, \cite[Theorem 6.7]{Roozbeh_Ranga} is not applicable to this graph but, as we shall see, Corollary \ref{acyclic_corollary} is. 
\label{example1}
\end{example}

\subsection{The structure of Cohn-Leavitt algebras of no-exit objects} 

If $E$ is a no-exit graph and $S\subseteq R(E),$ the relative graph $E_S$ is also a no-exit graph provided that the following condition holds. 
\begin{itemize}
\item[$(*)$] Vertices of every cycle of $E$ are in $S$.
\end{itemize}
We say that the object $(E,S)$ is a {\em no-exit object} of the category $\G$ if $E$ is a no-exit graph and condition $(*)$ holds. We consider finite, no-exit objects of $\G$ in the next lemma.   

\begin{lemma} If $(E,S)$ is a finite, no-exit object of $\G$, then 
$CL_K(E,S)$ is graded $*$-isomorphic to the algebra 
$$\bigoplus_{i=1}^k \M_{k_i} (K)(\ol\alpha^i) \oplus \bigoplus_{j=1}^m \M_{m_j} (K[x^{n_j},x^{-n_j}])(\ol\gamma^j)$$
where $k$ is the number of sinks and vertices in $R(E)-S$, $k_i$ is the number of paths ending in the sink or the vertex in $R(E)-S$ indexed by $i$ for $i=1,\ldots, k,$ $\ol\alpha^i(j)$ is the length of the $j$-th path ending in the $i$-th vertex for $j=1,\ldots, k_i$ and $i=1,\ldots, k,$ $m$ is the number of cycles, $m_j$ the number of paths ending in a fixed but arbitrary vertex of the cycle indexed by $j,$ $n_j$ is the length of the $j$-th cycle for $j=1,\ldots, m,$ and $\ol\gamma^j(l)$ is the length of the $l$-th path ending in the fixed vertex of the $j$-th cycle for $l=1,\ldots, m_j$ and $j=1,\ldots, m.$
\label{finite_no-exit} 
\end{lemma}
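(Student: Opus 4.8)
The plan is to transport the statement through the relative-graph construction of Lemma~\ref{star_iso} and then invoke the structure theorem \cite[Proposition~5.1]{Roozbeh_Lia} for Leavitt path algebras of finite no-exit graphs. First I would replace $CL_K(E,S)$ by the graded $*$-isomorphic algebra $L_K(E_S)$. Next I would observe that, since $E$ is finite, $E_S$ is finite as well (it adds at most $|R(E)|$ vertices and $|E^1|$ edges), and that, since $(E,S)$ is a no-exit object, condition $(*)$ holds, so $E_S$ is again a no-exit graph (the remark preceding the lemma). A finite graph is row-finite, and in a finite no-exit graph every infinite path ends in a cycle: it must repeat a vertex, so it contains a closed path and hence a cycle, and once it reaches a vertex of that cycle it cannot branch off, so from then on it coincides with the cyclic path. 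Thus $E_S$ satisfies all the hypotheses of \cite[Proposition~5.1]{Roozbeh_Lia}, which gives a graded $*$-isomorphism of $L_K(E_S)$ onto $\bigoplus_{w}\M_{\kappa_w}(K)(\ol\alpha^w)\oplus\bigoplus_{c}\M_{\mu_c}(K[x^{n_c},x^{-n_c}])(\ol\gamma^c)$, where $w$ runs over the sinks of $E_S$ and $c$ over the cycles of $E_S$.

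The remaining work is to rewrite this indexing data in terms of $E$, using the explicit description of $E_S$; the governing observation is that an added edge $e'$ has range the added vertex $\ra(e)'$, which emits nothing, so any path of $E_S$ ending at a non-primed vertex uses no added edge. I would then verify: (i) the sinks of $E_S$ are the sinks of $E$ together with the added vertices $v'$ for $v\in R(E)-S$, so there are $k$ of them; (ii) for a sink $u$ of $E$, the paths of $E_S$ ending at $u$ are exactly the paths of $E$ ending at $u$, length for length, so $\kappa_u,\ol\alpha^u$ become $k_i,\ol\alpha^i$; (iii) for $v\in R(E)-S$, the paths of $E_S$ ending at $v'$ are the trivial path $v'$ and the paths $qe'$ with $q$ a path of $E$ ending at $\so(e)$ and $\ra(e)=v$, which correspond length-preservingly (via $v'\leftrightarrow v$ and $qe'\leftrightarrow qe$) to the paths of $E$ ending at $v$, so $\kappa_{v'},\ol\alpha^{v'}$ again become $k_i,\ol\alpha^i$; (iv) no cycle of $E_S$ can use an added vertex (a sink) or an added edge (ranging into an added vertex), so the cycles of $E_S$ are exactly those of $E$, with the same lengths $n_j$; and (v) for a vertex $v$ on a cycle $c$ of $E$, condition $(*)$ forces $v\in S$, hence $\so_{E_S}^{-1}(v)=\so_E^{-1}(v)$ and the paths of $E_S$ ending at $v$ are exactly the paths of $E$ ending at $v$, so $\mu_c,\ol\gamma^c$ become $m_j,\ol\gamma^j$. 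Substituting (i)--(v) into the displayed decomposition yields the asserted graded $*$-isomorphism for $CL_K(E,S)$.

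I expect the only real obstacle to be the bookkeeping in (i)--(v): one must be sure that every bijection of paths is length-preserving, so that the shift vectors $\ol\alpha,\ol\gamma$ genuinely correspond, and that no path terminating at an un-primed vertex secretly runs through a primed edge. Both points reduce to the single fact that a primed edge can land only in a primed vertex and a primed vertex emits nothing, so this is a matter of care rather than of a new idea; beyond Lemma~\ref{star_iso} and \cite[Proposition~5.1]{Roozbeh_Lia}, nothing further is needed.
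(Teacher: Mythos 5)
Your proposal is correct and follows essentially the same route as the paper's proof: transport $CL_K(E,S)$ to $L_K(E_S)$ via Lemma~\ref{star_iso}, apply the graded $*$-isomorphism of \cite[Proposition~5.1]{Roozbeh_Lia} to the finite no-exit graph $E_S$, and match sinks, primed vertices, cycles, and path lengths of $E_S$ with the corresponding data of $(E,S)$. Your items (i)--(v) simply spell out the bookkeeping that the paper's proof states more briefly, so there is no substantive difference.
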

\begin{proof} 
Since $(E,S)$ is a finite, no-exit object, the relative graph $E_S$ is a finite, no-exit graph. Several different papers relate the Leavitt path algebra of a finite, no-exit graph with a direct sum matricial algebras over $K$ and over $K[x^n,x^{-n}]$ for positive integers $n.$ In particular, there is an algebra isomorphism by \cite[Theorem 3.7]{AAPM}, a $*$-algebra isomorphism by \cite[Corollary 32]{Zak_Lia}, a graded algebra isomorphism by \cite[Theorem 6.7]{Roozbeh_Ranga}, and a {\em graded $*$-algebra isomorphism} by \cite[Proposition 5.1]{Roozbeh_Lia}. Thus, $L_K(E_S)$ is graded $*$-isomorphic to an algebra as in the statement of the lemma. Let $\phi_{E_S}$ denote this graded $*$-isomorphism. 

By construction of the relative graph and the assumption that $(*)$ holds, the cycles and paths which end in them correspond to the same elements in $(E,S)$ and in $E_S.$ The number of sinks of $E_S$ is equal to the number of sinks in $E$ plus the number of vertices in $R(E)-S.$ By construction, the sinks and paths which end in them correspond to the same elements in $(E,S)$ and in $E_S$ and the number of paths in $E$ which end in $v\in R(E)-S$ correspond exactly to the number of paths in $E_S$ which end in the sink $v'.$ The length of a path ending in $v\in R(E)-S$ and the length of the corresponding path ending in $v'\in E_S^0$ are the same.  
Let $\phi_{E,S}$ be the graded $*$-isomorphism $L_K(E_S)\to CL_K(E,S)$ from Lemma \ref{star_iso}. Then $\phi_{E_S}\phi_{E,S}^{-1}$ is a graded $*$-isomorphism of $CL_K(E,S)$ and the direct sum of matricial algebras as in the statement of the lemma. 
\end{proof}

\begin{proposition}\label{ultramatricial_rep}
Let $E$ be a graph and $S\subseteq R(E)$ such that $(E,S)$ is a no-exit object of the category $\G$.  
Then $CL_K(E,S)$ is graded $*$-isomorphic to a direct limit 
$$\varinjlim_{i\in I} \left(\bigoplus_{l=1}^{k_i} \M_{k_{il}} (K)(\ol\alpha^i_l) \oplus \bigoplus_{j=1}^{m_i} \M_{m_{ij}} (K[x^{n_{ij}},x^{-n_{ij}}])(\ol\gamma^i_j)\right)$$ with injective connecting maps where $I$ is a directed set, $k_i, m_i$ are nonnegative integers and $k_{ij},$ $m_{il},$ and $n_{ij}$ positive integers, $\ol \alpha^i_l\in \Zset^{k_{il}}$ and $\ol\gamma^i_j\in \Zset^{m_{ij}},$ for $i\in I,$ $l=1, \ldots k_i,$ and $j=1,\ldots, m_i$. 

In particular, if $E$ is a no-exit graph and $S=R(E),$ the condition $(*)$ holds, so the conclusion holds for $L_K(E).$ 
\end{proposition}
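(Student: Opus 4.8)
The plan is to combine the two main tools prepared in the preliminaries: the direct-limit representation of Cohn-Leavitt algebras over finite complete subobjects (Proposition \ref{dir_lim_of_fin_subobject}) and the structure theorem for \emph{finite} no-exit objects (Lemma \ref{finite_no-exit}). First I would invoke Proposition \ref{dir_lim_of_fin_subobject} to write $CL_K(E,S)$ as the direct limit, with injective connecting maps, of the algebras $CL_K(F,T)$ taken over the directed system $I$ of all finite complete subobjects $(F,T)$ of $(E,S)$. The connecting maps and the maps into $CL_K(E,S)$ are graded $*$-homomorphisms by that proposition, so the whole diagram lives in the category of graded $*$-algebras over $K$, and a direct limit there is computed as in the category of $*$-algebras with the grading inherited componentwise.

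Second, I would argue that each finite complete subobject $(F,T)$ occurring in the system is again a no-exit object, i.e. that $F$ is a no-exit graph and that condition $(*)$ holds for $(F,T)$. For the no-exit property: any cycle of $F$ is a cycle of $E$, and by the completeness condition (C) the set $\so_F^{-1}(v)$ equals $\so_E^{-1}(v)$ for every vertex $v$ of that cycle (since such $v$ lies in $S$ by $(*)$ for $(E,S)$, emits an edge of $F$, hence is in $T$ and inherits all of $E$'s edges); since no vertex of the cycle is a bifurcation in $E$, none is in $F$. For condition $(*)$: the same reasoning shows every vertex of every cycle of $F$ lies in $T$. Hence Lemma \ref{finite_no-exit} applies to each $(F,T)$, giving a graded $*$-isomorphism of $CL_K(F,T)$ with a finite direct sum of graded matricial algebras $\M_{k_{il}}(K)(\ol\alpha^i_l)$ and $\M_{m_{ij}}(K[x^{n_{ij}},x^{-n_{ij}}])(\ol\gamma^i_j)$ of exactly the advertised form, with the exponents $n_{ij}$ equal to the lengths of the cycles of $F$.

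Third, I would transport the direct system through these isomorphisms: for each $i\in I$ choose the graded $*$-isomorphism $\theta_i$ from Lemma \ref{finite_no-exit}, and replace each connecting map $CL_K(F_i,T_i)\to CL_K(F_{i'},T_{i'})$ by $\theta_{i'}\circ(\text{connecting map})\circ\theta_i^{-1}$; this is still an injective graded $*$-homomorphism. The direct limit of the new system is then graded $*$-isomorphic to $\varinjlim_{i\in I} CL_K(F_i,T_i)\cong_{\gr} CL_K(E,S)$, since replacing the objects of a direct system by isomorphic objects (compatibly) does not change the limit. This yields precisely the displayed presentation, with $I$ directed, $k_i,m_i$ nonnegative integers, and $k_{ij},m_{il},n_{ij}$ positive integers. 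The final ``in particular'' sentence is immediate: if $S=R(E)$ then condition $(*)$ is automatic because every vertex of any cycle is regular (a no-exit cycle vertex emits exactly the one edge of the cycle), so $(E,R(E))$ is a no-exit object and the conclusion holds for $CL_K(E,R(E))=L_K(E)$.

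I do not anticipate a serious obstacle; the statement is essentially an assembly of results already in place. The one point requiring genuine care is the stability of the no-exit condition under passage to finite complete subobjects — specifically, verifying that condition (C) forces a cycle vertex of $F$ to retain \emph{all} of its $E$-edges, so that no bifurcation is accidentally created and so that $(*)$ is preserved. A secondary routine check is that the functoriality in Proposition \ref{dir_lim_of_fin_subobject} really does keep every connecting map graded and $*$-preserving, so that the transported system still lives in the graded $*$-category; but this is exactly what that proposition asserts, so it requires no new work.
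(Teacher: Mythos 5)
Your proposal is correct and follows essentially the same route as the paper: represent $CL_K(E,S)$ as a direct limit over finite complete subobjects via Proposition \ref{dir_lim_of_fin_subobject}, observe that completeness and condition $(*)$ make each finite subobject a no-exit object, and apply Lemma \ref{finite_no-exit} to each term. Your extra care about transporting the system through the isomorphisms and about cycle vertices retaining all their edges is sound but only spells out what the paper's proof leaves implicit.
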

\begin{proof}
The object $(E,S)$ can be represented as a direct limit of a family of finite, complete subobjects $(F_i, S_i)_{i\in I}$ by \cite[Proposition 3.5]{Ara_Goodearl}. Each subobject $(F_i, S_i)$ is finite and no-exit since $(E,S)$ is no-exit and the vertices of all cycles of $F_i$ are in $S_i$ by completeness. By Proposition \ref{dir_lim_of_fin_subobject}, $CL_K(E,S)$ is graded $*$-isomorphic to the direct limit of the algebras $CL_K(F_i, S_i)$ over $I$ with injective connecting maps. 
By Lemma \ref{finite_no-exit}, $CL_K(E, S)$ is graded $*$-isomorphic to an algebra of the required form. 
\end{proof}

Proposition \ref{ultramatricial_rep} is related to \cite[Theorem 3.2]{Gonzalo_Mercedes_Brox} except that we use Cohn-Leavitt algebras of the subgraphs of the graph and \cite[Theorem 3.2]{Gonzalo_Mercedes_Brox} uses the Leavitt path algebras corresponding to the duals of the subgraphs, not the subgraphs themselves. Moreover, we consider both the graded and the involutive structure of the algebras. 

If $(E,S)$ is a no-exit object, we shall say that the graded $*$-algebra from Proposition \ref{ultramatricial_rep} is a {\em graded locally matricial representation} of the algebra $CL_K(E,S).$ 
The next corollary shows that no-exit graphs are the {\em only} graphs with Leavitt path algebras graded $*$-isomorphic to a direct limit 
as in Proposition \ref{ultramatricial_rep}.

\begin{corollary} Let $E$ be an arbitrary graph, $K$ a field, $R^{\gr}$ a graded $*$-algebra of the form 
$$\varinjlim_{i\in I} \left(\bigoplus_{l=1}^{k_i} \M_{k_{il}} (K)(\ol\alpha^i_l) \oplus \bigoplus_{j=1}^{m_i} \M_{m_{ij}} (K[x^{n_{ij}},x^{-n_{ij}}])(\ol\gamma^i_j)\right),$$ and $R$ a $*$-algebra of the form 

$$\varinjlim_{i\in I} \left(\bigoplus_{l=1}^{k_i} \M_{k_{il}} (K) \oplus \bigoplus_{j=1}^{m_i} \M_{m_{ij}} (K[x,x^{-1}])\right),$$
where $I,$ $k_i, m_i,$ $k_{il},$ $m_{ij},$ $n_{ij}$ $\ol \alpha^i_l,$ and $\ol\gamma^i_j$ are as in Proposition \ref{ultramatricial_rep} for $i\in I,$ $l=1, \ldots k_i,$ and $j=1,\ldots, m_i$. 
The following conditions are equivalent. 
\begin{enumerate}[\upshape(a)]
\item $L_K(E)$ is graded $*$-isomorphic to $R^{\gr}$.
\item $L_K(E)$ is graded isomorphic to $R^{\gr}.$ 
\item $L_K(E)$ is $*$-isomorphic to $R.$ 
\item $L_K(E)$ is isomorphic to $R.$
\item $E$ is no-exit.
\end{enumerate} 
Let $S^{\gr}$ denote a graded $*$-algebra of the form 
$$\bigoplus_{i\in I} \M_{\kappa_i} (K)(\ol\alpha^i) \oplus \bigoplus_{j \in J} \M_{\mu_j} (K[x^{n_j},x^{-n_j}])(\ol\gamma^j)$$
and $S$ a $*$-algebra of the form 
$$\bigoplus_{i\in I} \M_{\kappa_i} (K) \oplus \bigoplus_{j \in J} \M_{\mu_j} (K[x,x^{-1}])$$
where $I, J$ are sets, $\kappa_i, i\in I$ and $\mu_j, j\in J$ cardinals, $n_j$ positive integers for $j\in J,$  $\ol \alpha^i\in \Zset^{\kappa_i}$ for $i\in I,$ and $\ol\gamma^j\in \Zset^{\mu_j}$ for $j\in J.$  

Let (1) to (4) denote the conditions (a) to (d) with $R^{\gr}$ replaced by $S^{\gr}$ and $R$ replaced by $S$ and (5) denote the condition below.
\begin{enumerate}
\item[(5)] $E$ is a row-finite, no-exit graph such that every infinite path ends in a sink or a cycle.
\end{enumerate}
Then conditions (1) to (5) are equivalent. 
\label{no-exit_corollary}
\end{corollary}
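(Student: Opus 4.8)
The plan is to prove Corollary \ref{no-exit_corollary} by establishing two cycles of implications: $(a)\Rightarrow(b),(c),(d)\Rightarrow(e)\Rightarrow(a)$ for the direct-limit case, and $(1)\Rightarrow(2),(3),(4)\Rightarrow(5)\Rightarrow(1)$ for the direct-sum case. The implications $(a)\Rightarrow(b)$, $(a)\Rightarrow(c)$, $(c)\Rightarrow(d)$, $(b)\Rightarrow(d)$ are immediate by forgetting structure, and similarly for $(1)$–$(4)$. The implication $(e)\Rightarrow(a)$ is precisely Proposition \ref{ultramatricial_rep} (with $S=R(E)$, since the condition $(*)$ is vacuous in that case, as noted there), and $(5)\Rightarrow(1)$ is \cite[Proposition 5.1]{Roozbeh_Lia} quoted in subsection \ref{subsection_necessity}. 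So the whole content is in the ``converse'' directions: $(d)\Rightarrow(e)$ for the first block and $(4)\Rightarrow(5)$ for the second.

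For $(d)\Rightarrow(e)$: assume $L_K(E)\cong R$ as $K$-algebras, where $R=\varinjlim_{i\in I}\bigl(\bigoplus_l \M_{k_{il}}(K)\oplus\bigoplus_j \M_{m_{ij}}(K[x,x^{-1}])\bigr)$. I would argue by contraposition: suppose $E$ is \emph{not} no-exit, i.e. $E$ has a cycle $c$ with an exit. Then by standard Leavitt-path-algebra theory, $L_K(E)$ contains (up to isomorphism) the Leavitt path algebra of the graph consisting of $c$ together with one exit edge; such an algebra is \emph{purely infinite simple} (or at least contains an infinite idempotent), hence $L_K(E)$ is not a directed union of the algebras $A_i=\bigoplus_l \M_{k_{il}}(K)\oplus\bigoplus_j \M_{m_{ij}}(K[x,x^{-1}])$, each of which is a finite direct sum of (full matrix algebras over a noetherian domain, namely $K$ or $K[x,x^{-1}]$), so each $A_i$ is noetherian and contains no infinite idempotents. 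A clean way to package this: the monoid $\mathcal V(R)$ of isomorphism classes of finitely generated projectives over $R$ is a directed union of the free/cancellative monoids $\mathcal V(A_i)\cong \Nset^{(k_i+m_i)}$, hence $\mathcal V(R)$ is cancellative; but $\mathcal V(L_K(E))\cong M_E$ (the graph monoid of Ara–Moreno–Pardo), and $M_E$ is cancellative iff $E$ is no-exit — this is essentially the last part of the statement (Corollary \ref{projectives}) and follows from the structure of $M_E$ for a cycle with an exit (where one gets a non-cancellative relation). Alternatively I can invoke \cite[Theorem 3.2]{Gonzalo_Mercedes_Brox} directly, since $(d)$ matches its hypothesis after passing to the dual graph; but I would prefer the monoid argument to keep the proof self-contained and to set up Corollary \ref{projectives} at the same time.

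For $(4)\Rightarrow(5)$: assume $L_K(E)\cong S=\bigoplus_{i\in I}\M_{\kappa_i}(K)\oplus\bigoplus_{j\in J}\M_{\mu_j}(K[x,x^{-1}])$ as $K$-algebras. First, $(4)\Rightarrow(d)$ trivially (a direct sum is a particular directed union), so by the first block $E$ is no-exit. It remains to upgrade ``no-exit'' to ``row-finite, no-exit, every infinite path ends in a sink or a cycle''. The key extra feature of $S$ over a general direct limit $R$ is that $\mathcal V(S)\cong\bigoplus_{i\in I}\Nset\oplus\bigoplus_{j\in J}\Nset$ is not merely cancellative but \emph{atomic} (every nonzero element is a sum of finitely many atoms, the atoms being the classes of the simple modules over each block). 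So $M_E\cong\mathcal V(L_K(E))$ is atomic and cancellative. I would then show, by analyzing $M_E$ for a no-exit graph, that atomicity of $M_E$ forces $E$ to be row-finite (an infinite emitter $v$ gives $[v]=\sum_{e\in F}[r(e)]+[\text{tail}]$ for every finite $F\subseteq s^{-1}(v)$, producing infinitely many distinct decompositions and no atom below $[v]$) and forces every infinite path to end in a sink or a cycle (an infinite path that does neither meets infinitely many bifurcations or never stabilizes, again giving an element of $M_E$ with no atom below it). Conversely each of these graph conditions is known to make $M_E$ atomic. This direction is where the real work is, and it is essentially \cite[Theorem 3.7]{AAPM} combined with the monoid description; I expect to cite those results for the harder half rather than reprove them.

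**The main obstacle** will be the precise monoid bookkeeping in $(4)\Rightarrow(5)$ — pinning down exactly which graph-theoretic pathology (infinite emitter versus infinite path not ending in a sink or cycle) obstructs atomicity, and checking that no-exit plus these two conditions is sufficient. The graded/involutive refinements $(a),(b),(c)$ and $(1),(2),(3)$ cost essentially nothing beyond what is already assembled in Proposition \ref{ultramatricial_rep}, Lemma \ref{finite_no-exit}, and the cited graded-$*$-isomorphism results, because the converse implications only ever use the weakest hypothesis $(d)$ (resp. $(4)$), which all the others imply.
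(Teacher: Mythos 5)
Your proposal is correct in outline, but for the two substantive implications it follows a genuinely different route from the paper. The paper proves (d) $\Rightarrow$ (e) by observing that a direct limit with injective connecting maps of matricial algebras over $K$ and $K[x,x^{-1}]$ is directly finite and then invoking \cite[Theorem 4.12]{Lia_Traces} (directly finite $\Leftrightarrow$ no-exit), and it proves (4) $\Rightarrow$ (5) by observing that such a direct sum is locally Baer and invoking \cite[Theorem 15]{Roozbeh_Lia_Baer}; each step is then two lines, at the cost of leaning on those two published characterizations. You instead argue through the monoid $V$: cancellativity of $V(L_K(E))$ forces no-exit, and atomicity plus cancellativity of $V(S)\cong\bigoplus\Nset$ forces row-finiteness and the infinite-path condition. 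This is exactly the content of the paper's Proposition \ref{projectives_necessary}, whose proof is independent of Corollary \ref{no-exit_corollary}, so your route is not circular; it effectively inverts the paper's logical order (the paper deduces Corollary \ref{projectives} from Corollary \ref{no-exit_corollary}, you would deduce Corollary \ref{no-exit_corollary} from the monoid necessity results and get Corollary \ref{projectives} essentially for free). What your approach buys is a single unified mechanism and independence from the Baer and direct-finiteness characterizations; what it costs is the refinement-monoid machinery for \emph{arbitrary} graphs (one must use \cite[Corollary 5.16]{Ara_Goodearl} rather than the row-finite Ara--Moreno--Pardo monoid $M_E$ you name, and one cannot simply cite \cite[Theorem 3.7]{AAPM}, which assumes countable row-finite graphs -- the point is to recheck that its necessity arguments never use those hypotheses), together with the computations $V$ is continuous, $V(\M_\kappa(K))\cong V(\M_\mu(K[x,x^{-1}]))\cong\Nset$ for arbitrary cardinals, and cancellativity passing to direct limits. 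Two parenthetical claims in your (d) $\Rightarrow$ (e) are inaccurate as stated, though not load-bearing since the monoid argument is your actual proof: the subalgebra of $L_K(E)$ generated by a cycle with one exit is a Cohn--Leavitt algebra of that subgraph rather than its Leavitt path algebra, and the Leavitt path algebra of a cycle with an exit to a sink is not purely infinite simple (it is not even simple); what is true, and suffices, is that the base vertex of the cycle is an infinite idempotent. Finally, your ``trivial'' step (4) $\Rightarrow$ (d) does need the small observation that $\M_\kappa(K)$ and $\M_\mu(K[x,x^{-1}])$ for infinite cardinals are directed unions of finite matrix algebras, so that $S$ really is of the form $R$ with injective connecting maps.
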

\begin{proof}
The implications (a) $\Rightarrow$ (b) $\Rightarrow$ (d) and (a) $\Rightarrow$ (c) $\Rightarrow$ (d) are direct and (e) $\Rightarrow$ (a) follows from Proposition \ref{ultramatricial_rep}. So, it remains to show (d) $\Rightarrow$ (e). Assuming (d), it is direct to check that $L_K(E)$ is directly finite (in the sense that for any $x,y$ with a local unit $u$ such that $xu=ux=x$ and $yu=uy=y,$ if $xy=u$ then $yx=u$) since the connecting maps are injective. By \cite[Theorem 4.12]{Lia_Traces}, $E$ is a no-exit graph.

The implications (1) $\Rightarrow$ (2) $\Rightarrow$ (4) and (1) $\Rightarrow$ (3) $\Rightarrow$ (4) are direct. Assuming (4), it is direct to check that $L_K(E)$ is a locally Baer ring ($pL_K(E)p$ is Baer for every idempotent $p$). By \cite[Theorem 15]{Roozbeh_Lia_Baer}, this implies condition (5). 
The implication (5) $\Rightarrow$ (1) holds by \cite[Proposition 5.1]{Roozbeh_Lia}. 
\end{proof}

Proposition \ref{ultramatricial_rep} also implies that the acyclic graphs are the only graphs with Leavitt path algebras graded $*$-isomorphic to a directed union of graded matricial $*$-algebras over $K.$

\begin{corollary} Let $E$ be an arbitrary graph, $K$ a field, $R^{\gr}$ a graded $*$-algebra of the form 
$$\varinjlim_{i\in I} \bigoplus_{l=1}^{k_i} \M_{k_{il}} (K)(\ol\alpha^i_l),$$ and $R$ a $*$-algebra of the form 
$$\varinjlim_{i\in I} \bigoplus_{l=1}^{k_i} \M_{k_{il}} (K),$$
where $I,$ $k_i,$ $k_{il},$ and $\ol \alpha^i_l$ are as in Proposition \ref{ultramatricial_rep} for $i\in I$ and $l=1, \ldots k_i,$ 
The following conditions are equivalent. 
\begin{enumerate}[\upshape(a')]
\item $L_K(E)$ is graded $*$-isomorphic to $R^{\gr}.$
\item $L_K(E)$ is graded isomorphic to $R^{\gr}.$
\item $L_K(E)$ is $*$-isomorphic to $R.$
\item $L_K(E)$ is isomorphic to $R.$
\item $E$ is acyclic.
\end{enumerate} 
Let $S^{\gr}$ denote a graded $*$-algebra of the form 
$\bigoplus_{i\in I} \M_{\kappa_i} (K)(\ol\alpha^i)$
and $S$ a $*$-algebra of the form 
$\bigoplus_{i\in I} \M_{\kappa_i} (K)$
where $I$ is a set, $\kappa_i$ cardinals, and $\ol \alpha^i\in \Zset^{\kappa_i}$ for $i\in I.$

Let (1') to (4') denote the conditions (a') to (d') with $R^{\gr}$ replaced by $S^{\gr}$ and $R$ replaced by $S$ and let (5') denote the condition below.
\begin{enumerate}
\item[(5')] $E$ is a row-finite, acyclic graph such that every infinite path ends in a sink.
\end{enumerate}
Then conditions (1') to (5') are equivalent. 
\label{acyclic_corollary}
\end{corollary}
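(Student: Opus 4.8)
The plan is to mimic, mutatis mutandis, the proof of Corollary \ref{no-exit_corollary}, replacing each structural ring-theoretic property used there by its ``acyclic'' counterpart. For the first block of equivalences, the implications (a') $\Rightarrow$ (b') $\Rightarrow$ (d') and (a') $\Rightarrow$ (c') $\Rightarrow$ (d') are immediate (forgetting the grading, resp.\ the involution), and (e') $\Rightarrow$ (a') is a special case of Proposition \ref{ultramatricial_rep}: an acyclic graph is in particular no-exit, there are no cycles so the summands over $K[x^{n_{ij}},x^{-n_{ij}}]$ do not occur, and taking $S=R(E)$ makes $(*)$ vacuous. The only substantive implication is (d') $\Rightarrow$ (e'). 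Here I would argue that a directed union of matricial algebras over $K$ (with injective connecting maps) is a von Neumann regular ring with no nonzero elements $x$ satisfying a nontrivial polynomial-like obstruction to acyclicity; concretely, each $\M_{k_{il}}(K)$ is a finite-dimensional semisimple algebra, hence the direct limit is a locally finite-dimensional (ultramatricial) von Neumann regular algebra, and in particular it has no nonzero nilpotent-free structure incompatible with a cycle. The cleanest route is to invoke a known characterization: $L_K(E)$ is von Neumann regular if and only if $E$ is acyclic (Abrams--Rangaswamy), together with the observation that any algebra isomorphic to a directed union of finite-dimensional semisimple $K$-algebras is von Neumann regular. Thus (d') forces $L_K(E)$ von Neumann regular, hence $E$ acyclic.

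For the second block, conditions (1') through (4') are the ``global'' (non--direct-limit) versions, and the scheme is the same: (1') $\Rightarrow$ (2') $\Rightarrow$ (4') and (1') $\Rightarrow$ (3') $\Rightarrow$ (4') by forgetting structure, (5') $\Rightarrow$ (1') by \cite[Proposition 5.1]{Roozbeh_Lia} (an acyclic row-finite graph in which every infinite path ends in a sink has no cycles, so the decomposition there has only the $\M_{\kappa_i}(K)$ summands), and the work is in (4') $\Rightarrow$ (5'). For this I would proceed exactly as in the proof of Corollary \ref{no-exit_corollary}: assuming (4'), each summand $\M_{\kappa_i}(K)$ is a (possibly infinite-size, finitely-supported) matrix algebra over a field and hence is a Baer $*$-ring, and a direct sum of Baer rings localizes to a Baer ring, so $pL_K(E)p$ is Baer for every idempotent $p$, i.e.\ $L_K(E)$ is locally Baer. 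One then invokes the Leavitt path algebra characterization of locally Baer rings together with the fact that acyclicity of $E$ is already guaranteed by the first block (a direct sum is a special directed union, so (4') implies (d') implies $E$ acyclic), to pin down exactly condition (5'): row-finiteness and ``every infinite path ends in a sink.'' Alternatively, and perhaps more directly, I would cite the characterization of Leavitt path algebras that are direct sums of matricial algebras over $K$ (the ``locally matricial'' case) which in the no-exit characterization corresponds to Corollary \ref{comet_corollary} or to the Abrams--Aranda--Perera--Mantese result \cite[Theorem 3.7]{AAPM} specialized to the acyclic case.

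I expect the main obstacle to be (4') $\Rightarrow$ (5'): one must extract from the purely ring-theoretic data of a direct sum of $\M_{\kappa_i}(K)$'s both that $E$ is acyclic (which the first block handles) and, crucially, the finer graph conditions of being row-finite with every infinite path ending in a sink. The cleanest way to isolate the latter is to observe that a direct sum decomposition into matrix algebras forces a much more rigid module structure than a mere directed union does --- for instance it forces the monoid of finitely generated projectives to be a free abelian monoid on the idempotents $\{[\M_{\kappa_i}(K)]\}$, which by the Ara--Moreno--Pardo realization of this monoid as the graph monoid $M_E$ translates into the graph conditions in (5'). In the write-up I would phrase this as: (4') $\Rightarrow$ $E$ acyclic by the first block, and then $L_K(E) \cong S$ together with the semisimplicity/Baer-ness of $S$ forces, via \cite[Theorem 15]{Roozbeh_Lia_Baer} and the acyclicity, precisely condition (5'); and (5') $\Rightarrow$ (1') by \cite[Proposition 5.1]{Roozbeh_Lia}. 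The routine verifications (that forgetting grading or involution is harmless, that matrix algebras over a field are Baer $*$-rings, that a direct sum of Baer rings is locally Baer) I would not spell out in detail.
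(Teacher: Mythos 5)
Your proposal is correct and follows essentially the same route as the paper: the easy implications by forgetting structure, (e')$\Rightarrow$(a') from Proposition \ref{ultramatricial_rep}, (d')$\Rightarrow$(e') via von Neumann regularity of ultramatricial algebras together with the Abrams--Rangaswamy result \cite[Theorem 1]{Gene_Ranga}, (4')$\Rightarrow$(5') via locally Baer plus \cite[Theorem 15]{Roozbeh_Lia_Baer} combined with regularity to exclude cycles, and (5')$\Rightarrow$(1') via the no-exit representation (\cite[Proposition 5.1]{Roozbeh_Lia}, i.e.\ Corollary \ref{no-exit_corollary}). The auxiliary remarks about the projective monoid are an unnecessary alternative, but the main argument matches the paper's proof.
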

\begin{proof}
The implications (a') $\Rightarrow$ (b') $\Rightarrow$ (d') and (a') $\Rightarrow$ (c') $\Rightarrow$ (d') are direct.  Since a direct limit of matricial algebras over $K$ is a regular ring, the implication (d') $\Rightarrow$ (e') follows from \cite[Theorem 1]{Gene_Ranga} stating that $L_K(E)$ is regular if and only if $E$ is acyclic. The implication (e') $\Rightarrow$ (a') follows from the fact that the absence of cycles in a no-exit graph implies the absence of the Laurent polynomial algebras in the graded locally matricial representation from Proposition \ref{ultramatricial_rep}.

The implications (1') $\Rightarrow$ (2') $\Rightarrow$ (4') and (1') $\Rightarrow$ (3') $\Rightarrow$ (4') are direct. Assuming (4'), it follows that $L_K(E)$ is a locally Baer and regular ring. By \cite[Theorem 15]{Roozbeh_Lia_Baer}, $E$ is a row-finite, no-exit graph and every infinite path ends in a sink or a cycle. The regularity of $L_K(E)$ implies that $E$ is acyclic by \cite[Theorem 1]{Gene_Ranga} so (5') holds. The implication (5') $\Rightarrow$ (1') holds by Corollary \ref{no-exit_corollary} since the absence of cycles in a graph as in (5') implies the absence of the Laurent polynomial terms in the representation from Corollary \ref{no-exit_corollary}. 
\end{proof}

The graph in Example \ref{example1} is acyclic but it does not satisfy condition (5') of Corollary \ref{acyclic_corollary}. By Corollary \ref{acyclic_corollary}, Leavitt path algebra of this graph is graded $*$-isomorphic to a direct limit of matricial algebras over $K$ but not to a direct sum of algebras of the form $\M_\kappa(K)(\ol \gamma)$ where $\kappa$ is a cardinal and $\ol\gamma\in \Zset^\kappa$. The same conclusion can be drown also for the graph in the following example.  

Let $E$ be the graph in the figure below.
\[\xymatrix{
\bullet & \bullet& \bullet  & \dots & \\
\ar[u]\bullet \ar[r] & \ar[u]\bullet \ar[r]& \ar[r]
\ar[u]\bullet& \ar@{.}\dots
}\] The Leavitt path algebra of this graph is graded $*$-isomorphic to $$\varinjlim_n \left(\bigoplus_{i=1}^{n} \M_{i+1}(K)(0,1,2,\ldots, i)\oplus \M_n(K)(0,1,\ldots, n-1)\right).$$
By Corollary \ref{acyclic_corollary}, $L_K(E)$ is not graded $*$-isomorphic to an algebra as $S^{\gr}$ in Corollary \ref{acyclic_corollary}.

Considering Corollaries \ref{no-exit_corollary} and \ref{acyclic_corollary}, one could suspect that the Leavitt path algebra of a no-exit graph without sinks
necessarily lacks matricial algebras over $K$ in its graded locally matricial representation. 
Yet, this is not the case. Consider the graph below 
\[\xymatrix{
\bullet\ar@(ld, lu) & \bullet\ar@(ru, rd) & \circ\ar@(ru, rd)\\
\bullet\ar@(ld, lu)  & \bullet_v \ar[l] \ar[ul] \ar[u] \ar@{.>}[ur] \ar@{.>}[r]  &  \circ \ar@(ru, rd)  \\
}\]
with an infinite emitter $v$ emitting countably many edges to vertices each of which emits a single loop. This graph is no-exit and without sinks and has a graded locally matricial representation 
\[\varinjlim_n \left(\M_2(K[x,x^{-1}])(0,1)^n\oplus K\right)\] 
which can be seen similarly as in Example \ref{example1}.
However, the following corollary of Proposition \ref{ultramatricial_rep} holds.  

\begin{corollary} Let $E$ be an arbitrary graph, $K$ a field, $S^{\gr}$ a graded $*$-algebra of the form 
$$\bigoplus_{i\in I} \M_{\mu_i} (K[x^{n_{i}},x^{-n_{i}}])(\ol\gamma^i)$$
and $S$ a $*$-algebra of the form 
$\bigoplus_{i\in I} \M_{\mu_i} (K[x,x^{-1}])$
where $I$ is a set, $\mu_i$ cardinals, $n_i$ positive integers, and $\ol \gamma^i\in \Zset^{\mu_i}$ for $i\in I.$
The following conditions are equivalent. 
\begin{enumerate}[\upshape(1'')]
\item $L_K(E)$ is graded $*$-isomorphic to $S^{\gr}.$
\item $L_K(E)$ is graded isomorphic to $S^{\gr}.$
\item $L_K(E)$ is $*$-isomorphic to $S.$
\item $L_K(E)$ is isomorphic to $S.$
\item $E$ is a row-finite, no-exit graph without sinks such that every infinite path ends in a cycle.
\end{enumerate}  
\label{comet_corollary}
\end{corollary}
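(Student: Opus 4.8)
The plan is to follow the template of the proofs of Corollaries \ref{no-exit_corollary} and \ref{acyclic_corollary}. The implications (1'') $\Rightarrow$ (2'') $\Rightarrow$ (4'') and (1'') $\Rightarrow$ (3'') $\Rightarrow$ (4'') are immediate: forgetting the involution, forgetting the grading, and using the $*$-ring isomorphism $K[x^{n},x^{-n}]\cong K[x,x^{-1}]$ (which sends $x^n$ to $x$ and is compatible with the involution $x\mapsto x^{-1}$) turn $S^{\gr}$ into $S$. So the substance is the cycle (4'') $\Rightarrow$ (5'') $\Rightarrow$ (1'').

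For (5'') $\Rightarrow$ (1''): since $E$ is row-finite and no-exit and every infinite path of $E$ ends in a cycle, in particular every infinite path ends in a sink or a cycle, so \cite[Proposition 5.1]{Roozbeh_Lia} yields a graded $*$-isomorphism of $L_K(E)$ onto $\bigoplus_{i\in I}\M_{\kappa_i}(K)(\ol\alpha^i)\oplus\bigoplus_{j\in J}\M_{\mu_j}(K[x^{n_j},x^{-n_j}])(\ol\gamma^j)$, where $I$ records the sinks of $E$ together with its infinite sinks (the latter handled as in \cite[Section 5.2]{Roozbeh_Lia}). By hypothesis $E$ has no sinks, and an infinite sink is an infinite path none of whose vertices lies on a cycle, hence an infinite path not ending in a cycle; so (5'') forces $I=\varnothing$ and $L_K(E)$ becomes graded $*$-isomorphic to an algebra of the form $S^{\gr}$.

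For (4'') $\Rightarrow$ (5''): assuming (4''), $L_K(E)$ is in particular isomorphic to an algebra of the form of $S$ in Corollary \ref{no-exit_corollary} (with the matrix-over-$K$ summands absent), so by that corollary $E$ is a row-finite, no-exit graph in which every infinite path ends in a sink or a cycle. It remains to rule out sinks and infinite sinks, for which the key point is a corner computation in $S$. Every idempotent $p$ of $S=\bigoplus_{i\in I}\M_{\mu_i}(K[x,x^{-1}])$ has finite support, and each of its finitely many nonzero blocks is conjugate to a standard $0$--$1$ diagonal idempotent because finitely generated projective modules over the principal ideal domain $K[x,x^{-1}]$ are free; hence $pSp$ is isomorphic to a finite direct sum of algebras $\M_{r}(K[x,x^{-1}])$, and none of these algebras, nor a finite direct sum of several of them, is isomorphic to the field $K$. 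On the other hand, if $E$ had a sink or an infinite sink, then by \cite[Proposition 5.1]{Roozbeh_Lia} (again invoking \cite[Section 5.2]{Roozbeh_Lia} for the infinite-sink case) $L_K(E)$ would have a ring direct summand $\M_\kappa(K)$ with $\kappa\neq 0$, hence a corner isomorphic to $K$ coming from a rank-one idempotent of that summand, contradicting $L_K(E)\cong S$. Thus $E$ has no sinks and no infinite sinks; combined with the conclusion already obtained from Corollary \ref{no-exit_corollary}, every infinite path of $E$ then ends in a cycle, so (5'') holds.

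The step I expect to be the main obstacle is the bookkeeping around infinite sinks: the absence of sinks does not by itself forbid infinite sinks (an infinite ray being the basic example), so the sharpening of ``ends in a sink or a cycle'' to ``ends in a cycle'' really rests on the fact that each infinite sink contributes a nonzero matrix algebra over $K$ in the representation of \cite[Proposition 5.1]{Roozbeh_Lia}, as spelled out in \cite[Section 5.2]{Roozbeh_Lia}. The corner-ring computation for $S$ is routine once the freeness of projective modules over $K[x,x^{-1}]$ is used.
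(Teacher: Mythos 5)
Your proof is correct, and its overall architecture matches the paper's: the easy implications are dispatched the same way, (5'')~$\Rightarrow$~(1'') rests in both cases on the graded locally matricial representation of \cite[Proposition 5.1]{Roozbeh_Lia} (equivalently, (5)~$\Rightarrow$~(1) of Corollary \ref{no-exit_corollary}) together with the observation that no sinks and no infinite sinks force the $\M_\kappa(K)$ summands to be absent, and (4'')~$\Rightarrow$~(5'') begins in both cases by invoking Corollary \ref{no-exit_corollary} to get that $E$ is row-finite, no-exit, with every infinite path ending in a sink or a cycle. Where you genuinely diverge is in how sinks (and infinite sinks) are excluded under (4''): the paper argues that the socle of $K[x,x^{-1}]$ is zero, hence the socle of $S$, hence the socle of $L_K(E)$ is zero, and then cites the socle theory for arbitrary graphs \cite[Corollary 5.3]{Gonzalo_et_al_socle}; you instead compute corners of $S$, using that finitely generated projective modules over the PID $K[x,x^{-1}]$ are free to show every corner $pSp$ is a finite direct sum of algebras $\M_r(K[x,x^{-1}])$ and hence never isomorphic to $K$, whereas a sink or infinite sink would produce a corner of $L_K(E)$ isomorphic to $K$. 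The paper's route is shorter because it leans on an existing structural result about the socle of Leavitt path algebras of arbitrary graphs (which simultaneously kills line points, i.e.\ both sinks and infinite sinks); your route is more elementary and self-contained, needing only the PID fact and \cite[Proposition 5.1]{Roozbeh_Lia}, and it has the virtue of making the infinite-sink bookkeeping explicit, a point the paper's wording passes over quickly. Both are valid proofs of the corollary.
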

\begin{proof}
The implications (1'') $\Rightarrow$ (2'') $\Rightarrow$ (4'') and (1'') $\Rightarrow$ (3'') $\Rightarrow$ (4'') are direct. Since the socle of the algebra $K[x,x^{-1}]$ is zero, (4'') implies that the socle of $L_K(E)$ is zero as well. Thus, there are no sinks in $L_K(E)$ by \cite[Corollary 5.3]{Gonzalo_et_al_socle}. 
Also, $E$ is row-finite and its every infinite path ends in a cycle by Corollary \ref{no-exit_corollary} so (5'') holds. Condition (5'') implies condition (1) of Corollary \ref{no-exit_corollary}. The absence of sinks (and regular vertices not in set $T$ for any complete subobject $(F,T)$ of $(E, R(E))$ since $E$ is row-finite) implies the absence of the matrix algebras over $K$ in the representation from Corollary \ref{no-exit_corollary}. Thus, condition (1'') holds. 
\end{proof}

\subsection{Realization Question.}
The reader may wonder whether every graded locally matricial $*$-algebra of the form as $R^{\gr}$ in Corollary \ref{no-exit_corollary} can be realized as a Leavitt path algebra of some no-exit graph and we refer to this question as the Realization Question. In fact, the referee of this paper had the same question. Referee's question inspired the author to include the following proposition in the paper. The proposition shows that the Realization Question has a negative answer. 
\begin{proposition}
Let $K$ be a field trivially graded by the set $\Zset$ of integers and $R$ be the algebra $\M_2(K)(0,0).$ The algebra $R$ is not graded $*$-isomorphic to any Leavitt path algebra. 
\label{realization_question} 
\end{proposition}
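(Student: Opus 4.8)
The plan is to suppose for contradiction that there is a graded $*$-isomorphism $f: L_K(E) \to R = \M_2(K)(0,0)$ for some graph $E$, and derive a contradiction by computing graded invariants on both sides. First I would record the structure of $R$ as a graded ring: since the shift is $(0,0)$, the grading on $\M_2(K)(0,0)$ is trivial, so $R_0 = \M_2(K)$ and $R_n = 0$ for $n \neq 0$; in particular $R$ is a unital, finite-dimensional, trivially $\Zset$-graded $*$-algebra, and $R_0 \cong \M_2(K)$ is a simple algebra. Consequently $L_K(E)$ must be unital (so $E^0$ is finite), trivially graded, simple, four-dimensional over $K$, and carry a $*$-structure matching the $*$-transpose on $\M_2(K)$.

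Next I would pin down the graph. Because $L_K(E)$ is finite-dimensional, $E$ is finite and acyclic (an edge not on a cycle contributes unboundedly many monomials only if there is a cycle; more directly, $L_K(E)$ finite-dimensional forces $E$ finite acyclic by the standard dimension computation, or one can invoke that $L_K(E)$ is simple and finite-dimensional, hence $\cong \M_k(K)$ for some $k$, which happens exactly when $E$ is a finite acyclic graph with a single sink and $k$ paths ending at it — here $k = 2$). The trivial grading forces every path $p$ with $|p| \neq 0$ appearing in the standard basis to have $p q^*$-degree zero, but more to the point the natural $\Zset$-grading on $L_K(E)$ is trivial only if $E$ has no edges at all — since any edge $e$ is a homogeneous element of degree $1 \neq 0$ and $e = \so(e) e \neq 0$ in $L_K(E)$. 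Hence $E$ has no edges, so $L_K(E) = \bigoplus_{v \in E^0} K v$ with all products $vw = \delta_{vw} v$; this is a commutative algebra $\cong K^{|E^0|}$. But $R_0 = \M_2(K)$ is noncommutative, giving the contradiction.

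The one subtlety — and the step I expect to need the most care — is justifying that the $\Zset$-grading on $L_K(E)$ being trivial forces $E^1 = \varnothing$. The point is that a graded $*$-isomorphism $f$ is degree-preserving, so $f(L_K(E)_n) = R_n = 0$ for all $n \neq 0$; thus $L_K(E)_n = 0$ for all $n \neq 0$. Since each edge $e \in E^1$ lies in $L_K(E)_1$ and satisfies $e^*e = \ra(e) \neq 0$, hence $e \neq 0$, the vanishing $L_K(E)_1 = 0$ forces $E^1 = \varnothing$. Then $L_K(E) = CL_K(E,\varnothing)$ reduces to the span of vertices with orthogonal idempotent multiplication, which is commutative, while $R$ is not; this contradicts $R \cong L_K(E)$ as (ungraded) rings. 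Therefore no such graph $E$ exists, and $R = \M_2(K)(0,0)$ is not graded $*$-isomorphic to any Leavitt path algebra. (In fact the argument shows $R$ is not even graded isomorphic to a Leavitt path algebra, a slightly stronger statement, since only the degree-preserving and ring-isomorphism properties of $f$ were used.)
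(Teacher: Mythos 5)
Your proof is correct, and it takes a genuinely different and more elementary route than the paper's. You observe that the shift $(0,0)$ makes $\M_2(K)(0,0)$ trivially graded, so a graded isomorphism forces $L_K(E)_n=0$ for all $n\neq 0$; since every edge is a nonzero homogeneous element of degree one (nonzero because $e^*e=\ra(e)\neq 0$), this gives $E^1=\varnothing$, hence $L_K(E)=\bigoplus_{v\in E^0}Kv$ is commutative, contradicting the noncommutativity of $\M_2(K)$. The paper instead invokes Corollary \ref{acyclic_corollary} to conclude that $E$ is finite, acyclic, with a single sink, uses the graded locally matricial representation coming from Proposition \ref{ultramatricial_rep} to reduce to comparing $\M_2(K)(0,0)$ with $\M_2(K)(0,1)$, and then distinguishes these by their zero components ($\M_2(K)$ versus $K\oplus K$). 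What each approach buys: yours is shorter, uses only the definition of the natural $\Zset$-grading plus the nonvanishing of vertices and edges, and, as you note, rules out even graded (non-$*$) isomorphism; the paper's route is more informative about \emph{why} the realization fails, identifying $\M_2(K)(0,1)$ as the only graded matrix algebra of that size arising from such a graph and thereby highlighting the role of the shift, which fits the surrounding Realization Question discussion. A minor remark: your middle paragraph (finite-dimensionality, acyclicity, simplicity, the count of paths to a sink) is not needed for your argument and its parenthetical justifications are loose; the clean chain is simply trivial grading $\Rightarrow E^1=\varnothing \Rightarrow L_K(E)$ commutative $\Rightarrow$ contradiction, so you could delete that material without loss.
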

\begin{proof}
Assume that there is a graph $E$ and such that $R$ and $L_K(E)$ are graded $*$-isomorphic. By Corollary \ref{acyclic_corollary}, $E$ is a row-finite, acyclic graph such that every infinite path ends in a sink. Moreover, since $R$ is unital, $E^0$ is finite and, since $E$ is row-finite with $E^0$ finite, $E^1$ is finite also.  Since $R$ is simple, $E$ has just one sink (otherwise $L_K(E)$ would be isomorphic to a direct sum of more than one matricial algebra and, thus, would not be simple). Let $Q$ be a graded locally matricial representation of $L_K(E).$
By  Corollary \ref{acyclic_corollary}, $Q$ is graded $*$-isomorphic to single graded matrix algebra. Since $\M_n(K)\cong \M_2(K)$ implies that $n=2,$ there are two paths ending in the sink of $E$ by Proposition \ref{ultramatricial_rep}. One path is the trivial path of length 0. If $k$ is the length of the other path, then $k\leq 1$ since otherwise there would be more than two paths ending at the sink of $E$. Thus $k=0$ or $k=1.$ However, $k=0$ cannot happen since there cannot be two different paths of length zero ending in a vertex. If $k=1,$ the algebras $R=\M_2(K)(0, 0)$ and $Q\cong_{\gr}\M_2(K)(0, 1)$ are not graded isomorphic since their zero-components,
$\left[
\begin{array}{cc}
K & K\\ 
K & K\\
\end{array}
\right]$ and $\left[
\begin{array}{cc}
K & 0\\ 
0 & K\\
\end{array}
\right]$ respectively, are not isomorphic.   
\end{proof}

\subsection{The monoid of finitely generated projective modules of a Leavitt path algebra.}

In this section we characterize when the monoid of finitely generated projective modules of a Leavitt path algebra is atomic and cancellative. We let  
$V(R)$ denote the monoid of isomorphism classes of finitely generated projective modules over $R$ or, equivalently, the monoid of the equivalence classes of conjugated idempotent matrices. We let $[x]$ denote the equivalence class of an idempotent matrix $x$. If $R$ is a ring, possibly without an identity element, $V(R)$ is defined using unitization $R^u$ of $R$ (\cite[\S 10]{Ara_Goodearl} has more details).

By \cite[Theorem 2.4]{AAPM}, if $E$ is a row-finite and countable graph, then the following conditions are equivalent. 
\begin{enumerate}[1.]
\item $E$ is acyclic and every infinite path ends in a sink. 
\item $L_K(E)$ is regular and $V(L_K(E))$ is cancellative and atomic.
\item $L_K(E)$ is regular and $V(L_K(E))$ is isomorphic to a direct sum of copies of the monoid of nonnegative integers $\Nset.$ 
\end{enumerate}
We shall delete the assumptions on the cardinality of the graph and characterize when $V(L_K(E))$ is atomic and cancellative (without the condition that $L_K(E)$ is regular). 

\begin{proposition}
Let $E$ be a graph and $K$ a field. 
\begin{enumerate}[(i)]
\item If $V(L_K(E))$ is cancellative, then $E$ is no-exit. 

\item If $V(L_K(E))$ is atomic, then every infinite path of $E$ ends in a sink or a cycle. 

\item If $V(L_K(E))$ is atomic, then $E$ is row-finite. 
\end{enumerate}
\label{projectives_necessary}
\end{proposition}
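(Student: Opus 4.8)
The plan is to prove the three statements by exploiting the monoid-theoretic structure of $V(L_K(E))$, using the well-known presentation of this monoid in terms of generators $\{[v] : v \in E^0\}$ and the relations $[v] = \sum_{e \in \so^{-1}(v)} [\ra(e)]$ for every regular vertex $v$ (this is the monoid $M_E$ of Ara–Moreno–Pardo, which is valid without cardinality restrictions). For each part I would argue by contraposition: I assume the graph fails the claimed graph-theoretic condition and I exhibit, inside $V(L_K(E))$, an explicit failure of cancellativity (resp. atomicity, resp. of atomicity again). The translation between ``vertex $v$ lies on a cycle with an exit / infinite path not ending in a sink or cycle / infinite emitter'' and a concrete relation among the $[v]$'s is the backbone of the whole argument.

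For part (i), suppose $E$ has a cycle $c$ based at a vertex $v$ with an exit, i.e.\ some vertex $w$ on $c$ is a bifurcation. Then in $M_E$ one gets, by walking around the cycle and using the defining relations at the vertices of $c$, a relation of the form $[v] = [v] + x$ for some nonzero $x$ (the ``extra'' summand coming from the exit edge and the vertices it can reach). Since $x \neq 0$ in $M_E$ — here I would invoke that the monoid $M_E$ is conical and that the specific element $x$ is a sum of generators that is genuinely nonzero, which can be checked via a suitable monoid morphism $M_E \to \mathbb{N}$ or $M_E \to \mathbb{N} \cup \{\infty\}$ separating it from $0$ — the equality $[v] = [v] + x$ with $x \neq 0$ is a direct violation of cancellativity. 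This reproves, in the non-unital generality needed here, the classical fact that cancellativity of $V(L_K(E))$ forces no-exit; I expect this to be the most delicate part because one must be careful that the ``extra summand'' $x$ is nonzero even when the graph is wild, and the cleanest way is to map down to the monoid of a finite complete subgraph containing $c$ and one exit edge, where $x\neq 0$ is elementary.

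For part (ii), suppose there is an infinite path $\pi = e_1 e_2 \cdots$ that does not end in a sink or a cycle. Then along $\pi$ one encounters infinitely many vertices that are bifurcations (or infinite emitters). Writing $v_n = \so(e_{n+1})$, the relations at these vertices give, for infinitely many $n$, a proper decomposition $[v_n] = [v_{n+1}] + y_n$ with $y_n \neq 0$; iterating, $[v_0]$ admits strictly refining decompositions of unbounded length, so $[v_0]$ cannot be written as a finite sum of atoms — atomicity fails. The key point to nail down is that infinitely many of the $y_n$ are nonzero, which is exactly the assumption that $\pi$ does not eventually become a single cycle repeated nor an infinite sink; again a reduction to subgraphs plus a counting morphism makes this rigorous. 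For part (iii), suppose $E$ has an infinite emitter $v$, so $\so^{-1}(v)$ is infinite. In $L_K(E)$ the relation (CK2) is \emph{not} imposed at $v$, and instead one shows that the classes $[ee^*]$ for $e \in \so^{-1}(v)$ are independent in a way that produces an element of $V(L_K(E))$ admitting arbitrarily long proper decompositions: for any finite subset $F \subseteq \so^{-1}(v)$ the idempotent $\sum_{e\in F} ee^*$ sits strictly below $[v]$ and these give a strictly increasing chain, so $[v]$ (or rather a suitable class dominating all of them) is not a finite sum of atoms. Concretely, I would use that $v - \sum_{e \in F} ee^*$ is a nonzero idempotent for every finite $F$, giving in $V(L_K(E))$ an infinite strictly decreasing sequence, which contradicts atomicity. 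The main obstacle throughout is the same: establishing that the relevant monoid elements are \emph{nonzero} (conicality / separation by a morphism) in full generality, and the uniform device I would use is restriction to finite complete subobjects, where Lemma \ref{finite_no-exit} and the explicit matricial form make all nonvanishing claims transparent, combined with the fact that $V$ commutes with the directed unions furnished by Proposition \ref{dir_lim_of_fin_subobject}.
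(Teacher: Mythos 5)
Your part (i) is essentially sound and, once the bookkeeping is unwound, close in spirit to the paper's argument (the paper works directly with the idempotents $p_n=c^n(c^*)^n$, uses $[v]=[p_n]=[p_{n+1}]+[p_n-p_{n+1}]$, and gets nonvanishing for free from the elementary fact that $[x]=0$ in $V(R)$ for an idempotent $x$ forces $x=0$ — no conicality morphisms or reduction to subobjects are needed). But parts (ii) and (iii) have a genuine gap: you conclude that atomicity fails from the existence of ``strictly refining decompositions of unbounded length'' (resp.\ an ``infinite strictly decreasing sequence''), and that inference is not valid in a general commutative monoid. For instance, in the monoid $\langle c,y \mid 2c=2c+y\rangle$ the element $2c$ is a sum of two atoms, yet $2c=2c+y=2c+y+y=\cdots$ gives decompositions into arbitrarily many nonzero summands; so atomicity by itself does not rule out the configurations you produce. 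The paper closes exactly this hole by invoking that $V(L_K(E))$ is a \emph{refinement} monoid (\cite[Theorem 3.5, Proposition 4.4]{Ara_Moreno_Pardo} for row-finite graphs, \cite[Corollary 5.16]{Ara_Goodearl} in general): if $[p_1]$ is a sum of $m$ atoms and also a sum of $n>m$ summands, refinement forces at least one summand to be zero, and then the concrete nonzero idempotents ($p_{k}-p_{k+1}$, $e_ie_i^*$, or $v-\sum_{i=1}^n e_ie_i^*$) give the contradiction. Without the refinement ingredient (or some substitute) your (ii) and (iii) do not go through.

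A secondary problem is your reliance on ``the monoid $M_E$ of Ara--Moreno--Pardo, valid without cardinality restrictions,'' with relations only at regular vertices. For graphs with infinite emitters — which is precisely the situation of part (iii), and can also occur on a cycle with an exit in part (i) — this presentation does \emph{not} compute $V(L_K(E))$; one needs the extended generators and relations of \cite{Ara_Goodearl} (classes of the idempotents $v-\sum_{e\in F}ee^*$ for finite $F\subseteq\so^{-1}(v)$), or, more simply, one should argue with explicit idempotents in $L_K(E)$ as the paper does. Your ``map down to the monoid of a finite complete subgraph'' also goes in the wrong direction: the continuity of $V$ gives maps from the $V$ of finite complete subobjects \emph{into} $V(L_K(E))$, and preservation of nonvanishing along these maps is again most cleanly obtained from the idempotent fact above rather than from injectivity claims about the connecting monoid maps.
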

The statements (i) and (ii) have been shown in the proof of \cite[Theorem 2.4]{AAPM} without using the assumptions on the cardinality of the graph. We outline the proofs for completeness. 
\begin{proof}
(i) Assume that $V(L_K(E))$ is cancellative and that $E$ has a cycle $c$ with an exit $e.$ Base the cycle $c$ at $v=\so(e)$ and let $p_n=c^n(c^*)^n.$ The idempotents $p_n$ are such that $L_K(E)p_{n+1}\subsetneq L_K(E)p_n$ and that $[v]=[p_n]$ for any nonnegative integer $n.$ The relation $[p_n]=[p_{n+1}]+[p_n-p_{n+1}]$ implies that $[v]=[v]+[p_n-p_{n+1}]$ and, by the assumption that the monoid is cancellative, that $[p_n-p_{n+1}]=0.$ This is a contradiction since then $p_n-p_{n+1}=v(p_n-p_{n+1})=0$ implies that $L_K(E)p_{n+1}=L_K(E)p_n.$

(ii) Assume that $V(L_K(E))$ is atomic and that $E$ has an infinite path $p$ which does not end in a sink or a cycle. The path $p$ necessarily has infinitely many bifurcations and we can write $p$ as $q_1q_2\ldots$ such that the range of each $q_n$ is a bifurcation and let $p_n=q_1\ldots q_n(q_1\ldots q_n)^*.$ If $v=\so(p),$ the idempotents $p_n$ are such that $vL_K(E)vp_{n+1}\subsetneq vL_K(E)vp_n,$ that $[\ra(q_n)]=[p_n]$ for any positive integer $n,$ and that 
\[[p_1]=[p_1-p_2]+[p_2-p_3]+\ldots+[p_{n-1}-p_n]+ [p_n].\]

By \cite[Theorem 3.5 and Proposition 4.4]{Ara_Moreno_Pardo}, $V(L_K(E))$ is a refinement monoid if $E$ is a row-finite graph. By \cite[Corollary 5.16]{Ara_Goodearl}, the same statement holds for arbitrary graphs. Since $V(L_K(E))$ is atomic by assumption, write $[p_1]$ as a sum of atoms and, if this sum has $m$ terms, consider $n> m.$ The fact that $V(L_K(E))$ is an atomic, refinement monoid and $n>m$ implies that at least one of the terms $[p_1-p_2], [p_2-p_3], \ldots, [p_{n-1}-p_n]$ or $ [p_n]$ is zero. This lead to a contradiction analogously as in the proof of (i).

(iii) Assume that $V(L_K(E))$ is atomic and that $E$ has an infinite emitter $v.$ If $\{e_n \mid n=1,2\ldots\}$ is a set of different edges in $\so^{-1}(v),$ consider the orthogonal idempotents $e_ne_n^*$ and nonzero idempotents $p_n=v-\sum_{i=1}^n e_ie_i^*$ orthogonal to every $e_ie_i^*,$ $i=1,\ldots,n.$ The relation $v=\sum_{i=1}^n e_ie_i^*+p_n$ implies that \[[v]=\sum_{i=1}^n[e_ie_i^*]+[p_n].\]

Let $a_1,\ldots, a_m$ be atoms such that $[v]=a_1+\ldots+a_m.$ Taking any $n>m,$ and using that  $V(L_K(E))$ is a refinement monoid, we obtain that either 
$[e_ie_i^*]=0$ for some $i=1,\ldots, n$ or $[p_n]=0$. The first case, $[e_ie_i^*]=0,$ implies that $L_K(E)e_ie_i^*=0$ which is a contradiction since $e_ie_i^*=ve_ie_i^*=0$ implies that $e_i=e_ie_i^*e_i=0$ and $e_i$ is a basis element. The second case, $[p_n]=0,$ implies that $L_K(E)p_n=0$ which is a contradiction since $p_n=vp_n=0$ implies that $e_{n+1}=ve_{n+1}=\sum_{i=1}^n e_ie_i^*e_{n+1}=0$ and $e_{n+1}$ is a basis element.   
\end{proof}

\begin{corollary}
Let $E$ be a graph, $K$ a field, and let $\Nset$ stand for the monoid of nonnegative integers. The following conditions are equivalent. 
\begin{enumerate}
\item[(5)] $E$ is a row-finite, no-exit graph in which every infinite path ends in a sink or a cycle. 

\item[(6)] $V(L_K(E))$ is isomorphic to a direct sum of copies of $\Nset.$ 

\item[(7)] $V(L_K(E))$ is atomic and cancellative.
\end{enumerate}
\label{projectives} 
\end{corollary}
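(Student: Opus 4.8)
The plan is to prove Corollary \ref{projectives} by establishing the cycle of implications $(5)\Rightarrow(6)\Rightarrow(7)\Rightarrow(5)$, leaning on the structural results already assembled in this section together with Proposition \ref{projectives_necessary}. For $(5)\Rightarrow(6)$, I would invoke \cite[Proposition 5.1]{Roozbeh_Lia} (or equivalently Corollary \ref{no-exit_corollary}, condition (5) $\Rightarrow$ (1)): a graph as in (5) has $L_K(E)$ graded $*$-isomorphic to $\bigoplus_{i\in I}\M_{\kappa_i}(K)(\ol\alpha^i)\oplus\bigoplus_{j\in J}\M_{\mu_j}(K[x^{n_j},x^{-n_j}])(\ol\gamma^j)$. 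Passing to $V(-)$, one uses that $V$ commutes with direct sums, that $V(\M_\kappa(K))\cong\Nset$ (and $V(\M_\mu(K[x^n,x^{-n}]))\cong\Nset$ as well, since $K[x^n,x^{-n}]\cong K[x,x^{-1}]$ is a PID with trivial $K_0$-reduced part, so every finitely generated projective module over these matrix rings is free of a well-defined rank). Hence $V(L_K(E))\cong\bigoplus_{I\cup J}\Nset$, which is (6). The grading is irrelevant for this step since $V$ is computed from the ungraded ring.

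For $(6)\Rightarrow(7)$ the argument is purely monoid-theoretic: a direct sum of copies of $\Nset$ is manifestly cancellative (it embeds in the corresponding direct sum of copies of $\Zset$) and atomic (its atoms are the standard basis vectors $e_\iota$, and every element is uniquely a finite sum of these). So this implication is immediate.

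The implication $(7)\Rightarrow(5)$ is where the real work lies, and it is essentially already done: assuming $V(L_K(E))$ is atomic and cancellative, Proposition \ref{projectives_necessary}(i) gives that $E$ is no-exit, Proposition \ref{projectives_necessary}(ii) gives that every infinite path ends in a sink or a cycle, and Proposition \ref{projectives_necessary}(iii) gives that $E$ is row-finite. Conjoining these three conclusions yields exactly condition (5). I expect the main subtlety to be in the $(5)\Rightarrow(6)$ step, specifically the care needed with infinite sinks and with the indexing sets $I$ and $J$ being possibly infinite — one must make sure the isomorphism $L_K(E)\cong\bigoplus_{i}\M_{\kappa_i}(K)\oplus\bigoplus_j\M_{\mu_j}(K[x,x^{-1}])$ is legitimately available for arbitrary (not necessarily countable) row-finite $E$ of the stated type, which is precisely what \cite[Proposition 5.1]{Roozbeh_Lia} and the surrounding discussion in section \ref{subsection_necessity} provide. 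Everything else is routine, and I would close by remarking that this corollary removes the countability and row-finiteness hypotheses from \cite[Theorem 2.4]{AAPM}, and that the equivalent conditions there involving regularity of $L_K(E)$ correspond, via Corollary \ref{acyclic_corollary}, to the acyclic case.
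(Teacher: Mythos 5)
Your proposal is correct and follows essentially the same route as the paper: (5) $\Rightarrow$ (6) via the structure theorem of Corollary \ref{no-exit_corollary} and the computation $V(\M_\kappa(K))\cong V(\M_\mu(K[x,x^{-1}]))\cong\Nset$, (6) $\Rightarrow$ (7) as an immediate monoid-theoretic observation, and (7) $\Rightarrow$ (5) by combining parts (i), (ii), (iii) of Proposition \ref{projectives_necessary}. Your extra justification that $K[x,x^{-1}]$ is a PID (so its finitely generated projectives are free) merely fills in a detail the paper leaves implicit.
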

The numbering of the first condition refers to the numbering in Corollary \ref{no-exit_corollary}. 
\begin{proof}
Condition (5) implies that $L_K(E)$ is isomorphic to an algebra of the form 
$$\bigoplus_{i\in I} \M_{\kappa_i} (K) \oplus \bigoplus_{j \in J} \M_{\mu_j} (K[x,x^{-1}])$$
by Corollary \ref{no-exit_corollary}. Thus, the monoid $V(L_K(E))$ is isomorphic to the monoid $\Nset^{|I|} \oplus \Nset^{|J|}$ which proves (6). The implication (6) $\Rightarrow$ (7) is direct. Condition (7) implies (5) by Proposition \ref{projectives_necessary}.
\end{proof}

\section{Locally noetherian Leavitt path algebras}\label{section_noetherian}

The property that a ring is locally noetherian generalizes the property that ring is noetherian for locally unital rings. In this section, we introduce 
the graded version of the property that a ring is locally noetherian. The main result, Theorem \ref{noetherian}, characterizes Leavitt path algebras which are graded locally noetherian. As a corollary, the assumptions that the underlying graph is countable and row-finite can be dropped from the characterization of locally noetherian Leavitt path algebras (\cite[Theorem 3.7]{AAPM}).

\subsection{Graded locally and categorically noetherian rings} Recall that a ring $R$ is said to be {\em locally unital} if for every finite set $F$ of elements of $R$, there is an idempotent $u$ such that $ux=xu=x$ for every $x\in F.$ The set of all such idempotents $u$ is a set of local units. 

Some authors (e.g. \cite{Tomforde}) define a ring $R$ to be locally unital by a stronger condition: there is a set of commuting idempotents $u_i, i\in I,$ such that for every $x\in R,$ there is $i\in I$ with $xu_i=u_ix=x.$ In this case, the set of local units is a directed set (the idempotent $u_i+u_j-u_iu_j$ is an upper bound for $u_i$ and $u_j$) and $R$ is the direct limit of rings $u_iRu_i.$ If the idempotents $u_i$ are pairwise orthogonal, then $R=\bigoplus_{i\in I} u_iRu_i.$ 

A ring $R$ is said to have {\em enough idempotents} if there is a set of pairwise orthogonal idempotents $u_i, i\in I,$ such that $R=\bigoplus_{i\in I} Ru_i=\bigoplus_{i\in I} u_iR.$ Every ring with enough idempotents is locally unital since the finite sums of the idempotents $u_i$ are local units. If $R$ has an orthogonal set of local units, then it has enough idempotents too. 

If $R$ is a ring with enough idempotents or with local units, then one usually considers unitary left $R$-modules (a left module $M$ is unitary, or full, if $RM = M$, see \cite{Garcia_Simon}, \cite[\S 10]{Ara_Goodearl} or \cite[\S 1]{AAPM}). If $R$ has enough idempotents, then it is a unitary left and right module. As a consequence, a free left module over such ring is also unitary and, hence, any quotient of a free left module is unitary as well.

We adapt these concepts to graded rings. Namely, a ring-theoretic property can be adapted to graded rings by considering {\em homogeneous} elements instead of arbitrary elements in the definition of the property. For example, a ring is regular if $x\in xRx$ for every element $x.$ Thus, a graded ring $R$ is said to be {\em graded} regular if $x\in xRx$ for every {\em homogeneous} element $x$. Following this principle, we  define a graded locally unital ring, a graded locally noetherian ring and, in the next section, a graded locally artinian ring. 

\begin{definition}\label{definition_of_graded_locally_unital}
A graded ring $R$ is {\em graded locally unital} if for every finite set $F$ of (homogeneous) elements of $R$, there is a homogeneous idempotent $u$ such that $ux=xu=x$ for every $x\in F.$   

A graded ring $R$ has {\em enough homogeneous idempotents} if there is a set of pairwise orthogonal homogeneous idempotents $u_i, i\in I,$ such that $R=\bigoplus_{i\in I} Ru_i=\bigoplus_{i\in I} u_iR.$ 
\end{definition}

The word ``homogeneous'' appears in parenthesis in one instance in the definition above because requiring that the elements of the set $F$ are homogeneous is equivalent to requiring that the elements of the set $F$ are arbitrary.  Indeed, if it holds that for every finite set $F_h$ of homogeneous elements of $R$, there is a homogeneous idempotent $u$ such that $ux=xu=x$ for every $x\in F_h,$ and $F$ is any finite set of  elements of $R$, then let $F_h$ be the set of all homogeneous elements of $R$ which appear in representations of elements of $F$ as sums of homogeneous elements. Let $u$ be a homogeneous idempotent such that $ux=xu=x$ for each $x\in F_h.$ Then clearly $ux=xu=x$ for each $x\in F$ too. 

Analogously to the non-graded case, every graded ring with enough homogeneous idempotents is graded locally unital. If a graded ring $R$ has an orthogonal set of graded local units, then $R$ has enough homogeneous idempotents. 

A ring of the form $R=\bigoplus_{i\in I} Ru_i=\bigoplus_{i\in I} u_iR$ or $R=\bigoplus_{i\in I} u_iRu_i$ where $u_i$ are orthogonal idempotents of $R$ for $i\in I,$ is clearly neither left nor right noetherian. However, it is still of relevance to know if such ring is build up from left or right noetherian pieces. This is precisely the reason that motivated the introduction of locally and categorically noetherian rings. We recall these definitions first and then we adapt them to graded rings. 

A ring $R$ is said to be {\em locally left noetherian} if for every finite set $F$ of elements of $R$, there is an idempotent $e\in R$ such that $eRe$ contains $F$ and
$eRe$ is left noetherian. A locally right noetherian ring is defined analogously. A ring is locally noetherian if it is both locally left and locally right
noetherian. 

A ring is {\em categorically  left noetherian} if every finitely generated left module is noetherian. A categorically right noetherian ring is defined analogously. A ring is categorically noetherian if it is both categorically left and categorically right noetherian. 

By \cite[Proposition 1.2]{AAPM}, if $R=\bigoplus_{i\in I} Ru_i$ for a set of orthogonal idempotents $u_i, i\in I,$ then $R$ is categorically left noetherian if and only if $Ru_i$ is noetherian left module for each $i\in I.$ Thus, every ring $R$ with enough idempotents $u_i$ is categorically left noetherian if and only if $Ru_i$ is noetherian for each $i\in I.$ Such $R$ is categorically noetherian if and only if $Ru_i$ and $u_iR$ are noetherian for each $i\in I.$

A ring $R$ is locally left noetherian if and only if there are local units $u_i, i\in I,$ such that $u_iRu_i$ is left noetherian for every $i$ (see \cite[page 99]{AAPM}). Moreover, if $R$ is locally left noetherian and $v_j,j\in J,$ is any set of local units, then $v_jRv_j$ is left noetherian. This statement can be proven analogously as part (4) of Lemma \ref{noetherian_properties} below, which is the graded version of this fact.  

By \cite[Lemma 1.6]{AAPM}, if $R$ is a categorically left  noetherian ring with local units, then $R$ is locally left  noetherian too. By the paragraph preceding \cite[Lemma 1.6]{AAPM}, not every locally unital and locally noetherian ring is categorically noetherian. 

We turn to the graded rings now. A graded module $M$ over a graded ring $R$ is {\em graded noetherian} if every ascending chain of graded submodules of $M$ is constant eventually (equivalently, if every graded submodule of $M$ is finitely generated). A graded ring $R$ is {\em graded left noetherian} if it is graded noetherian as a left module (equivalently, if every graded left ideal is finitely generated). A graded right noetherian ring is defined analogously and a graded noetherian ring is defined using the usual convention.

If $R$ is a commutative ring graded by a finitely generated abelian group $\Gamma,$ then the following conditions are equivalent by \cite[Theorem 1.1]{Goto_Yamagishi}.  
\begin{enumerate}[1.]
\item $R$ is noetherian. 

\item $R$ is graded noetherian.

\item $R_0$ is noetherian and $R$ is finitely generated as an $R_0$-algebra.  
\end{enumerate}

The above equivalences do not hold if the group $\Gamma$ is not finitely generated (see \cite[Example 3]{Goto_Yamagishi}). In addition, the example at the end of \cite[\S 1.1.4]{Roozbeh_graded_ring_notes} exhibits a commutative ring which is graded noetherian but not noetherian. By \cite[Theorem 5.4.7]{Graded_Rings_book},  conditions (1) and (2) are equivalent for any $\Zset$-graded ring (not necessarily commutative). Thus, conditions (1) and (2) are equivalent for Leavitt path algebras. Theorem \ref{noetherian} implies analogous conclusion when ``noetherian'' is replaced by locally or categorically noetherian.  

We define the graded versions of locally and categorically noetherian rings now. 

\begin{definition}
A graded ring $R$ is {\em graded locally left noetherian} if for every finite
set $F$ of (homogeneous) elements of $R$, there is a homogeneous idempotent $e\in R_0$ such that $eRe$ contains $F$ and
$eRe$ is graded left noetherian. 

A graded ring is {\em graded categorically left noetherian} if every finitely generated graded left module is graded noetherian. 
\end{definition}
Requiring that the set $F$ consists of homogeneous elements in the definition of graded locally unital ring is equivalent to requiring that the set $F$ consists of arbitrary elements. This fact can be shown using the same argument as in the non-graded case in the paragraph following Definition \ref{definition_of_graded_locally_unital}. 

A graded locally right noetherian ring and a graded categorically right noetherian ring are defined analogously and a graded locally noetherian ring and a graded categorically noetherian ring are defined using the usual conventions. 

We prove some basic properties of the introduced concepts in the next lemma.

\begin{lemma}\label{noetherian_properties}
Let $R$ be a graded ring. 
\begin{enumerate}
\item If $R=\bigoplus_{i\in I} Ru_i$ for a set of homogeneous idempotents $u_i, i\in I,$ then $R$ is graded categorically left noetherian if and only if $Ru_i$ is graded noetherian left module for each $i\in I.$ This statement also has its right-sided analogue. 

\item If $R$ is categorically left  noetherian, then $R$ is graded categorically left noetherian. 

\item The ring $R$ is graded locally left noetherian if and only if $R$ has homogeneous local units $u_i, i\in I,$ such that $u_iRu_i$ is graded left noetherian for every $i.$ 

\item If $R$ is graded locally left  noetherian and $u_i,i\in I,$ is any set of homogeneous local units, then $u_iRu_i$ is graded left  noetherian. 

\item If $R$ is graded locally unital and locally left  noetherian, then $R$ is graded locally left  noetherian. 

\item If $R$ is graded locally unital and graded categorically left  noetherian, then it is graded locally left noetherian. 
\end{enumerate}
The statements (2) to (6) have their analogues with the words ``left'' replaced by ``right''.  
\end{lemma}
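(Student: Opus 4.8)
The plan is to prove the six statements essentially in order, using for statements (1)–(4) the graded analogues of the non-graded arguments cited in the excerpt (from \cite{AAPM}), with the key observation being that every step of those arguments can be carried out using only homogeneous idempotents and graded submodules, since idempotents appearing in the relevant decompositions and in the Peirce corner arguments are already homogeneous (in fact in $R_0$). For statement (1), I would adapt the proof of \cite[Proposition 1.2]{AAPM}: if $R=\bigoplus_{i\in I}Ru_i$, then any finitely generated graded left $R$-module $M$ is a graded quotient of a finite direct sum of shifts $\bigoplus_k Ru_{i_k}(\delta_k)$, so if each $Ru_i$ is graded noetherian then so is $M$; conversely $Ru_i$ is itself finitely generated and graded. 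The only care needed is that quotients and submodules be taken in the graded category, which is automatic since all maps in sight are graded. For statement (2), I note that a finitely generated graded module is in particular finitely generated, hence noetherian by hypothesis, hence graded noetherian (a chain of graded submodules is a chain of submodules).

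For statement (3), the forward direction: given the graded-locally-left-noetherian hypothesis, apply the definition to singleton sets $F=\{x\}$ to produce homogeneous idempotents $e\in R_0$ with $eRe$ graded left noetherian and $x\in eRe$; these $e$ form a set of homogeneous local units (enlarging a finite set $F$ and using that $eRe$ left noetherian descends to $e'Re'$ for $e'=e'e=ee'$ a homogeneous sub-idempotent, which is the graded Peirce-corner argument). The reverse direction: given homogeneous local units $u_i$ with $u_iRu_i$ graded left noetherian, for a finite set $F$ choose $u_i$ dominating $F$; then $u_iRu_i\supseteq F$ and is graded left noetherian, and $u_i\in R_0$ because a homogeneous idempotent fixing itself must lie in degree $0$ (its square being itself forces degree $0$ in a $\Zset$-grading, or more simply one takes the local units of $CL_K(E,S)$ which are sums of vertices, hence in degree $0$; in the abstract statement one should simply include "homogeneous idempotent $e\in R_0$" as in the definition and note $u_iRu_i$ for $u_i$ homogeneous of degree $0$ is graded). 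Statement (4) is the graded version of the cited fact on \cite[page 99]{AAPM}: given $R$ graded locally left noetherian, so by (3) there exist homogeneous local units $w_j$ with $w_jRw_j$ graded left noetherian, and given an arbitrary homogeneous local unit $u$, pick $w_j$ with $uw_j=w_ju=u$; then $uRu$ is a graded corner $u(w_jRw_j)u$ of the graded left noetherian ring $w_jRw_j$, hence graded left noetherian.

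For statements (5) and (6), which link the graded and non-graded notions under a graded-locally-unital hypothesis: for (5), given $R$ graded locally unital (so it has homogeneous local units) and locally left noetherian, take a finite set $F$; by local left noetherianness there is an idempotent $e$ with $eRe\supseteq F$ and $eRe$ left noetherian, but $e$ need not be homogeneous, so instead use the homogeneous local units of $R$: there is a homogeneous idempotent $u$ (in $R_0$) with $uF=Fu=F$ and one may arrange $ueu=e$, then $uRu\supseteq eRe$ is a — here is the subtlety — one wants $uRu$ left noetherian, which follows if $uRu$ is a homomorphic image or finitely generated module over $eRe$; the clean route is to note that $uRu$, being left noetherian as a (non-graded) ring because it sits between $eRe$ and... actually the argument should mirror the non-graded transfer: combine local left noetherianness with graded local unitality to get a common homogeneous idempotent whose corner is both a corner of a left noetherian ring and graded, hence graded left noetherian by (2) applied to that corner. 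For (6), given $R$ graded locally unital and graded categorically left noetherian, mimic \cite[Lemma 1.6]{AAPM}: for homogeneous local unit $u$, the module $Ru$ is finitely generated graded, hence graded noetherian, and then $uRu$, acting on $Ru$, inherits graded left noetherianness via the standard argument that a graded noetherian module over $R$ restricts to a graded noetherian module over the corner $uRu$ (using $uRu\cdot Ru\subseteq Ru$ and that graded $uRu$-submodules of $uRu$ correspond to graded $R$-submodules of $Ru$).

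**Main obstacle.** I expect the main friction to be bookkeeping around homogeneity of idempotents: making sure that when passing to Peirce corners $e'Re'$ for sub-idempotents $e'$ of a given homogeneous $e$, the sub-idempotent can be taken homogeneous of degree $0$, and that the graded module structures on $Ru$, $uRu$, and their quotients are compatible so that "graded submodule" on one side corresponds to "graded submodule" on the other. None of this is deep, but it is where a careless "the same proof works" could hide a gap; the resolution in each case is to observe that all the idempotents produced (either abstractly from the definitions or concretely from sums of vertices in the Leavitt path algebra setting) lie in $R_0$, so every corner is a graded subring and every relevant quotient map is graded.
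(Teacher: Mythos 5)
Your plan for parts (1)--(4) and (6) is correct and is essentially the paper's own argument: (1) via realizing a finitely generated graded module as a graded quotient of a finite direct sum of shifted modules $Ru_i$; (2) from the observation that a chain of graded submodules is a chain of submodules; (3) straight from the definitions (the homogeneous idempotents $e$ with $eRe$ graded left noetherian form a set of local units); and (4) and (6) by passing chains back and forth between $uRu$ (resp. $uRu$-ideals) and the larger graded noetherian object -- the paper writes out exactly the expansion--contraction computations ($u_iRu_i\,e_jRe_j I_m=I_m$ in (4), $u_iRI_n=I_n$ in (6)) where you invoke the standard ``corner of a (graded) left noetherian ring is (graded) left noetherian'' fact; same content, slightly different packaging.

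The one place where your proposal genuinely wavers is (5). Your first attempt takes the homogeneous idempotent $u$ to dominate the noetherian idempotent $e$ (arranging $ueu=e$, so $eRe\subseteq uRu$), and, as you note yourself, left noetherianness does not pass from a corner to an overring; your closing sentence gestures at the right repair but never states the needed reversal of domination, so as written the step would fail. The fix is one line: fix a homogeneous local unit $u$ (these exist by graded local unitality), and apply the definition of locally left noetherian to the singleton $\{u\}$ to get an idempotent $e$, not necessarily homogeneous, with $u\in eRe$ and $eRe$ left noetherian; then $ue=eu=u$, so $uRu=u(eRe)u$ is a corner of the unital left noetherian ring $eRe$, hence left noetherian, hence graded left noetherian (it is a graded subring because $u$ is homogeneous of degree zero, and graded chains are chains), and part (3) concludes. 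This is exactly what the paper does, phrased as ``the proof of (4) also shows (5) since every increasing sequence of graded ideals is an increasing sequence of ideals,'' i.e.\ it reuses the chain expansion--contraction from (4) with the non-homogeneous $e$ supplied by local noetherianness (cf.\ the non-graded statement on p.~99 of \cite{AAPM}). Your citation of (2) at that point is also slightly off: the fact needed is the elementary ``left noetherian graded ring is graded left noetherian,'' not the categorical statement (2).
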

\begin{proof}
To show the direction $(\Rightarrow)$ of (1), note that if $u_i$ is homogeneous, then $Ru_i$ is graded. Thus, if $R$ is graded categorically left noetherian, then $Ru_i$ is graded noetherian. For the converse, if $Ru_i$ is graded noetherian for every $i\in I$, then any finitely generated graded free left module is a submodule of a direct sum of finitely many modules $Ru_i$ and, hence, graded noetherian. As a consequence, any graded left module with finitely many homogeneous generators (which is a graded homomorphic image of a finitely generated graded free left module, see \cite[Section 1.2.4]{Roozbeh_graded_ring_notes}) is graded noetherian too. 

If $R$ is graded, then every noetherian module over $R$ is also graded noetherian. Thus (2) holds. 

To show (3), note that if $R$ is graded locally left noetherian, then the set of all homogeneous idempotents $e$ such that $eRe$ is graded left noetherian is a set of graded local units. Conversely, if $u_i, i\in I$ is a set of graded local units such that $u_iRu_i$ is graded left noetherian for all $i\in I,$ and $F$ is a finite set of homogeneous elements, then there is a local unit $u_i$ such that $F\subseteq u_iRu_i.$ This implies that $R$ is graded locally left noetherian by definition. 

To show (4), let $R$ be a graded locally left noetherian ring. Then, there is a set of homogeneous local units $e_j, j\in J,$ such that $e_jRe_j$ is left noetherian. If $R$ also has homogeneous local units $u_i, i\in I,$ then for every $i\in I,$ there is $j\in J$ such that $u_iRu_i\subseteq e_jRe_j$. If $I_n, n=1,2,\ldots,$ is an increasing sequence of graded left $u_iRu_i$-ideals, then $e_jRe_jI_n$ is an increasing sequence of graded left $e_jRe_j$-ideals. Thus, this sequence is constant and there is $n$ such that  $e_jRe_jI_n= e_jRe_jI_m$ for all $m\geq n.$ Hence $u_iRu_ie_jRe_jI_n=u_iRu_ie_jRe_jI_m$ for all $m\geq n$ as well. Since $u_iRu_ie_jRe_jI_m=u_iRu_iRe_jI_m=u_iRu_iRe_ju_iRu_iI_m=u_iRu_iI_m=I_m$ for all $m\geq n,$ we have that $I_n=I_m$ for all $m\geq n.$ Thus, $u_iRu_i$ is graded left noetherian.  

The proof of (4) also shows (5) since every increasing sequence of graded ideals is also an increasing sequence of ideals. 

The proof of (6) is analogous to the non-graded case (\cite[Lemma 1.6]{AAPM}). Let $R$ be graded categorically left noetherian with a set of graded local units $u_i, i\in I.$ By (3), it is sufficient to show that $u_iRu_i$ is graded left noetherian for any $i\in I.$ Let $I_n, n=1,2,\ldots,$ be an increasing sequence of left $u_iRu_i$-ideals for $i\in I$. Since $Ru_i$ is a finitely generated graded left ideal, it is graded noetherian. Thus, the increasing sequence $RI_n$ is constant eventually and so the sequence $u_iRI_n$ is constant eventually also. Since $u_iRI_n=u_iRu_iI_n=I_n,$ for any $n,$ the sequence $I_n$ is constant as well.  
\end{proof}

\subsection{Locally noetherian Leavitt path algebras}

By \cite[Theorem 3.7]{AAPM}, if $E$ is a countable, row-finite graph and $K$ a field, the following conditions are equivalent. 
\begin{enumerate}[1.] 
\item $L_K(E)$ is categorically left (right) noetherian. 
\item $L_K(E)$ is locally left (right) noetherian. 
\item $E$ is a no-exit graph such that every infinite path ends in a sink or a cycle. 
\item $L_K(E) \cong \bigoplus_{i\in I} \M_{\kappa_i} (K) \oplus \bigoplus_{j \in J} \M_{\mu_j} (K[x,x^{-1}]),$ where $I$ and $J$ are countable sets, and $\kappa_i, \mu_j$ are countable cardinals.
\end{enumerate}
The isomorphism in the last condition can be taken to be a ring or an algebra isomorphism. In \cite[Corollary 32]{Zak_Lia}, it is shown that the assumption that $E$ is countable can be dropped (in which case $I,$ $J,$ $\kappa_i,$ and $\mu_j$ may be of arbitrary cardinalities) for the equivalence of the last two conditions and that the isomorphism in the last condition can be taken to be a $*$-isomorphism. Our next result has no assumptions on the cardinality of the graph and incorporates the graded versions of the ring-theoretic properties as well. 

\begin{theorem}\label{noetherian}
For any graph $E$ and a field $K,$ the following conditions are equivalent. 
\begin{enumerate}
\item[(5)]  $E$ is a row-finite, no-exit graph such that every infinite path ends in a sink or a cycle. 
\item[(8l)] $L_K(E)$ is categorically left noetherian. 
\item[(8r)] $L_K(E)$ is categorically right noetherian. 
\item[(9l)] $L_K(E)$ is locally left noetherian.
\item[(9r)] $L_K(E)$ is locally right noetherian.
\item[(10l)] $L_K(E)$ is graded categorically left  noetherian. 
\item[(10r)] $L_K(E)$ is graded categorically right noetherian. 
\item[(11l)] $L_K(E)$ is graded locally left noetherian.
\item[(11r)] $L_K(E)$ is graded locally right noetherian.

\end{enumerate}
We let (8), (9), (10) and (11) denote conditions (8l), (9l), (10l) and (11l) respectively, with the word ``left'' deleted. 
\end{theorem}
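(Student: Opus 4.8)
**The plan is to prove Theorem \ref{noetherian} by establishing a cycle of implications among the nine conditions, leaning on Proposition \ref{ultramatricial_rep}, Lemma \ref{noetherian_properties}, and the known characterization \cite[Theorem 3.7]{AAPM} together with \cite[Corollary 32]{Zak_Lia}.** Observe first that conditions (5), (8l), (9l), (10l), (11l) have right-sided analogues, and by the left-right symmetry of $L_K(E)$ (via the involution $*$, which interchanges left and right ideals and is degree-reversing but still preserves the noetherian-type conditions) it suffices to prove the equivalences for the left-sided versions together with condition (5), because (8l) $\Leftrightarrow$ (8r) and so on will follow by applying $*$. Concretely, the core task is to show
\[
(5)\Longrightarrow(8\mathrm{l})\Longrightarrow(9\mathrm{l})\Longrightarrow(11\mathrm{l})\Longrightarrow(10\mathrm{l})\Longrightarrow \text{(back to a graph condition)}\Longrightarrow(5).
\]

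For the implications among the ring-theoretic conditions, several are essentially formal. The implication $(8\mathrm{l})\Rightarrow(9\mathrm{l})$ is \cite[Lemma 1.6]{AAPM} (categorically left noetherian plus local units implies locally left noetherian; $L_K(E)$ always has local units, namely finite sums of vertices). The implication $(9\mathrm{l})\Rightarrow(11\mathrm{l})$ is part (5) of Lemma \ref{noetherian_properties} (graded locally unital plus locally left noetherian implies graded locally left noetherian), using that $L_K(E)$ is graded locally unital since finite sums of vertices are homogeneous of degree $0$. The implication $(8\mathrm{l})\Rightarrow(10\mathrm{l})$ is part (2) of Lemma \ref{noetherian_properties}, and $(10\mathrm{l})\Rightarrow(11\mathrm{l})$ is part (6). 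So the real content is to connect the graph condition (5) to these ring conditions in both directions. For $(5)\Rightarrow(8\mathrm{l})$: assuming (5), \cite[Theorem 3.7]{AAPM} combined with \cite[Corollary 32]{Zak_Lia} gives $L_K(E)\cong\bigoplus_{i\in I}\M_{\kappa_i}(K)\oplus\bigoplus_{j\in J}\M_{\mu_j}(K[x,x^{-1}])$ with no restriction on cardinalities; this algebra has enough idempotents $u_i$ (the standard matrix units $e_{11}$ in each summand), and each $Ru_i$ is a single column, hence a noetherian left module over the relevant matrix algebra since $K$ and $K[x,x^{-1}]$ are noetherian. By \cite[Proposition 1.2]{AAPM}, $L_K(E)$ is categorically left noetherian, so (8l) holds.

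**The main obstacle is the reverse direction, recovering the graph condition (5) from a graded-local hypothesis such as (11l).** The plan here is: assuming (11l), I want to rule out (a) exits on cycles, (b) infinite emitters, and (c) infinite paths not ending in a sink or cycle. Each of these produces, inside $L_K(E)$, a strictly ascending chain of graded left ideals of the form $L_K(E)p_1\subsetneq L_K(E)p_2\subsetneq\cdots$ where the $p_n$ are homogeneous idempotents (in fact idempotents of degree $0$, built from paths $p$ via $c^n(c^*)^n$ for a cycle with exit, from $\sum_{i\le n}e_ie_i^*$ for an infinite emitter, or from truncations of a bifurcating infinite path), all contained in a single corner $uL_K(E)u$ for a fixed vertex idempotent $u$; this contradicts graded local left noetherianness of that corner. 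This mirrors the non-graded arguments in the proof of \cite[Theorem 3.7]{AAPM} and the arguments already given in Proposition \ref{projectives_necessary}, but one must check that the relevant ideals are genuinely \emph{graded} (which they are, since each $p_n$ is homogeneous of degree zero, so $L_K(E)p_n$ is a graded left ideal) so that the graded hypothesis bites. Once (5) is shown to follow from (11l), the whole cycle closes: $(5)\Rightarrow(8\mathrm{l})\Rightarrow(9\mathrm{l})\Rightarrow(11\mathrm{l})\Rightarrow(5)$ gives equivalence of $(5),(8\mathrm{l}),(9\mathrm{l}),(11\mathrm{l})$, and then $(8\mathrm{l})\Rightarrow(10\mathrm{l})\Rightarrow(11\mathrm{l})$ folds $(10\mathrm{l})$ into the equivalence class. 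Applying the involution $*$ throughout yields the right-sided equivalences, completing the proof.
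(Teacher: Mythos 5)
Your proposal follows essentially the same route as the paper: the same formal implications (8l)$\Rightarrow$(9l)$\Rightarrow$(11l) and (8l)$\Rightarrow$(10l)$\Rightarrow$(11l) via Lemma~\ref{noetherian_properties}, left--right symmetry via the involution (the paper phrases this through $(L_K(E)v)^*=vL_K(E)$ and $(vL_K(E)v)^*=vL_K(E)v$ for vertices $v$), (5)$\Rightarrow$(8l) by reducing to the matricial decomposition, and (11l)$\Rightarrow$(5) by producing strictly ascending chains of graded left ideals inside a vertex corner, exactly as in the paper's adaptation of the argument of \cite[Theorem 3.7]{AAPM}. Two corrections are needed to make your sketch literally accurate. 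First, in the cycle-with-exit and bifurcating-infinite-path cases the idempotents you name, $c^n(c^*)^n$ and $q_1\cdots q_n(q_1\cdots q_n)^*$, are \emph{decreasing}, so the left ideals they generate form strictly descending chains (relevant to the artinian results, not here); the ascending chains must be built from the complements $v-c^n(c^*)^n$ and $v-p_n$ inside $vL_K(E)v$, which is what the paper does (your infinite-emitter case $\sum_{i\le n}e_ie_i^*$ is fine as stated). Second, in (5)$\Rightarrow$(8l) the single matrix unit $e_{11}$ in each summand does not give enough idempotents (one does not get $R=\bigoplus_i Ru_i$ from those alone); take the full set of diagonal matrix units $e_{jj}$, each of whose column modules is noetherian because its submodules correspond to left ideals of $K$ or $K[x,x^{-1}]$. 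Finally, the step from graded local left noetherianness of $L_K(E)$ to the corner $vL_K(E)v$ being graded left noetherian is exactly part (4) of Lemma~\ref{noetherian_properties} and should be invoked explicitly; with these repairs your argument coincides with the paper's proof.
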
 
\begin{proof}
The left-right symmetry in conditions (8l) to (11r) holds since the vertices are homogeneous local units for $L_K(E)$ making $L_K(E)$ a ring with enough homogeneous idempotents such that $(L_K(E)v)^*=vL_K(E)$ and $(vL_K(E)v)^*=vL_K(E)v$ for any vertex $v.$

The implications  (8) $\Rightarrow$ (9), (9) $\Rightarrow$ (11), (8) $\Rightarrow$ (10) and (10) $\Rightarrow$ (11) hold by parts (2), (5) and (6) of Lemma \ref{noetherian_properties} since the vertices are orthogonal, homogeneous local units for $L_K(E).$  

The proof of \cite[Theorem 3.7]{AAPM} shows that (9) implies that $E$ is a no-exit graph in which infinite paths end in sinks or cycles without using that the graph $E$ is countable or row-finite. In particular, assuming that there is a cycle $c$ with an exit based at a vertex $v$, the left $vL_K(E)v$-ideals $vL_K(E)v(v-c^n(c^*)^n)$ form a strictly increasing chain. Since these ideals are generated by homogeneous elements, they are graded. Also, if there is an infinite path $p=q_1q_2\ldots$ which does not end in a sink or a cycle and has bifurcations at $\ra(q_n)$ for every $n,$ let $v=\so(p)$ and $p_n=q_1\ldots q_n(q_1\ldots q_n)^*.$ The left $vL_K(E)v$-ideals $vL_K(E)v(v-p_n)$ form a strictly increasing chain. These ideals are also generated by homogeneous elements and so they are graded. Thus, (11) implies that $E$ is a no-exit graph in which infinite paths end in sinks or cycles. So, for (11) $\Rightarrow$ (5), it is sufficient to show that (11) implies that $E$ is row-finite. Let us assume that (11) holds and that $E$ has an infinite emitter $v.$ Let $\{e_n \mid n=1,2\ldots\}$ be a set of different edges in $\so^{-1}(v)$ and $p_n=\sum_{i=1}^n e_ie_i^*.$ Then the graded left $vL_K(E)v$-ideals $vL_K(E)vp_n$ constitute a strictly increasing chain of graded left ideals. This contradicts (11). Hence $E$ is row-finite.   

To finish the proof, it is sufficient to show that (5) implies (8). By Corollary \ref{no-exit_corollary}, it is sufficient to note that a direct sum of algebras each of which is of the form $\M_\kappa(K)$ or $\M_\kappa(K[x, x^{-1}])$ for a cardinal $\kappa,$ is categorically noetherian. This can be checked directly by definition.   
\end{proof}

\begin{remarks}
\label{properties_12_to_15} 
\begin{enumerate}
\item[(i)] Results of \cite{Roozbeh_Ranga} and \cite{Roozbeh_Lia_Baer} imply that the following conditions are equivalent with conditions from Theorem \ref{noetherian}.
\begin{enumerate}
\item[(12)] $L_K(E)$ is (graded) locally Baer (i.e. every (graded) idempotent-generated corner of $L_K(E)$ is Baer, see \cite[Theorem 15]{Roozbeh_Lia_Baer}). 

\item[(13)] $L_K(E)$ is graded self-injective (see \cite[Theorem 6.7]{Roozbeh_Ranga}).

\item[(14)] $L_K(E)$ coincides with its graded socle (see \cite[Theorem 2.10 and Remark p. 469]{Roozbeh_Ranga}).

\item[(15)] $L_K(E)$ is graded semisimple (i.e. $L_K(E)$ is a direct sum of minimal graded ideals, see \cite[Theorems 2.10, 4.5 and Remark p. 469]{Roozbeh_Ranga}). 
\end{enumerate}

\item[(ii)] 
By Theorem \ref{noetherian}, a Leavitt path algebra is locally (or categorically) noetherian if and only if it is graded locally (or categorically) noetherian. Thus, 
the property of being locally noetherian is invariant for the graded structure of Leavitt path algebras. This happens also with the property of being locally Baer 
by \cite[Theorem 15]{Roozbeh_Lia_Baer}, but does not happen with every ring-theoretic property. For example, every Leavitt path algebra is graded regular by \cite[Theorem 9]{Roozbeh_regular}, while just Leavitt path algebras of acyclic graphs are regular as non-graded rings by \cite[Theorem 1]{Gene_Ranga}. Also, the classes of Leavitt path algebras which are locally Baer $*$-rings and which are graded locally Baer $*$-rings are different by \cite[Theorems 15 and 16]{Roozbeh_Lia_Baer}.
\end{enumerate}
\end{remarks}

\section{Locally artinian Leavitt path algebras}\label{section_artinian}

In this section, we introduce the graded versions of the properties of being categorically and locally artinian. The main result, Theorem \ref{artinian}, characterizes Leavitt path algebras which are graded locally and graded categorically artinian. As a corollary, the assumptions that the underlying graph is countable and row-finite can be dropped from the characterization of locally artinian Leavitt path algebras in \cite[Theorem 2.4]{AAPM}. Also, it is interesting to point out that our results imply that the property of being noetherian and artinian differ for Leavitt path algebras in the following sense.  
\begin{center}
\begin{tabular}{l}
{\em $L_K(E)$ is graded locally noetherian if and only if $L_K(E)$ is locally noetherian}\hskip.4cm while \\
{\em $L_K(E)$ can be graded locally artinian without being locally artinian.}
\end{tabular}
\end{center}

\subsection{Graded locally and categorically artinian rings}
A ring $R$ is said to be {\em locally left artinian} if for every finite set $F$ of elements of $R$, there is an idempotent $e\in R$ such that $eRe$ contains $F$ and
$eRe$ is left artinian. A ring is {\em categorically left artinian} if every finitely generated left module is artinian. A locally right artinian ring and a categorically right artinian ring are defined analogously. A locally artinian ring and a categorically artinian ring are defined using the usual conventions.

By \cite[Proposition 1.2]{AAPM}, if $R=\bigoplus_{i\in I} Ru_i$ for a set of idempotents $u_i, i\in I,$ then $R$ is categorically left artinian if and only if $Ru_i$ is artinian left module for each $i\in I.$ A ring $R$ is locally left artinian if and only if there are local units $u_i, i\in I,$ such that $u_iRu_i$ is left artinian for every $i$ (see \cite[page 99]{AAPM}). By \cite[Lemma 1.5]{AAPM}, if $R$ is a categorically left artinian ring with local units, then $R$ is locally left artinian too. By the paragraph preceding \cite[Lemma 1.5]{AAPM}, not every locally unital and locally artinian ring is categorically artinian. 

The definitions above adapt to graded rings as follows. A graded module over a graded ring $R$ is {\em graded artinian} if every descending chain of graded submodules is constant eventually. A graded ring $R$ is {\em graded left artinian} if it is graded artinian as a left module. A graded right artinian ring is defined analogously and a graded artinian ring is defined using the usual convention. 

\begin{definition}
A graded ring $R$ is {\em graded locally left artinian} if for every finite set $F$ of (homogeneous) elements of $R$, there is a homogeneous idempotent $e$ such that $eRe$ contains $F$ and $eRe$ is graded left artinian. 

A graded ring is {\em graded categorically left artinian} if every finitely generated graded left module is graded artinian.
\end{definition}
Similarly as in the definition of a graded locally left noetherian ring, the statements with and without the word ``homogeneous'' in parenthesis in the definition of a graded locally left artinian ring are equivalent. A graded locally right artinian ring and a graded categorically right artinian ring are defined analogously and a graded locally artinian ring and a graded categorically artinian ring are defined using the usual conventions. 

We summarize some of the properties of the artinian-related concepts in the next lemma. The proof of the lemma is completely analogous to the proof of Lemma \ref{noetherian_properties} and we omit it. 

\begin{lemma}\label{artinian_properties} 
Let $R$ be a graded ring. 
\begin{enumerate}
\item If $R=\bigoplus_{i\in I} Ru_i$ for a set of homogeneous idempotents $u_i, i\in I,$ then $R$ is graded categorically left artinian if and only if $Ru_i$ is graded artinian left module for each $i\in I.$ This statement also has its right-sided analogue. 

\item If $R$ is categorically left  artinian, then $R$ is graded categorically left  artinian. 

\item The ring $R$ is graded locally left  artinian if and only if $R$ has homogeneous local units $u_i, i\in I,$ such that $u_iRu_i$ is graded left artinian for every $i.$ 

\item If $R$ is graded locally left  artinian and $u_i,i\in I,$ is any set of homogeneous local units, then $u_iRu_i$ is graded left  artinian. 

\item If $R$ is graded local unital and locally left artinian, then $R$ is graded locally left  artinian. 

\item If $R$ is graded locally unital and graded categorically left  artinian, then it is graded locally left artinian.  
\end{enumerate}
The statements (2) to (6) have their analogues with the words ``left'' replaced by ``right''. 
\end{lemma}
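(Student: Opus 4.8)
The plan is to reproduce the proof of Lemma \ref{noetherian_properties} essentially verbatim, replacing throughout ``noetherian'' by ``artinian'' and ``ascending (increasing) chain of graded submodules'' by ``descending (decreasing) chain of graded submodules''. All six statements are established using the chain condition only in its purely formal guise (a monotone chain of graded submodules is eventually constant), so the passage from the noetherian to the artinian side introduces no new idea; the content of the argument is simply to observe that each step is insensitive to the direction of the chains.

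In detail, for (1)($\Rightarrow$) one notes that if $u_i$ is homogeneous then $Ru_i$ is a graded left module, hence graded artinian whenever $R$ is graded categorically left artinian; for ($\Leftarrow$), a finitely generated graded free left module embeds as a graded submodule of a finite direct sum of copies of the $Ru_i$ and is therefore graded artinian, and any graded left module with finitely many homogeneous generators is a graded epimorphic image of such a module (\cite[Section 1.2.4]{Roozbeh_graded_ring_notes}), hence graded artinian as well. Statement (2) holds because every artinian module over a graded ring is a fortiori graded artinian, the graded submodules being a subclass of all submodules. For (3), when $R$ is graded locally left artinian the homogeneous idempotents $e$ with $eRe$ graded left artinian form a set of graded local units, and conversely, given such a set of graded local units $u_i$ and a finite set $F$ of homogeneous elements, some $u_i$ satisfies $F\subseteq u_iRu_i$, which is the defining property.

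The only computation of substance is in (4), and it transfers word for word from the proof of Lemma \ref{noetherian_properties}: if $e_j$, $j\in J$, are homogeneous local units with $e_jRe_j$ graded left artinian and $u_i$, $i\in I$, are arbitrary homogeneous local units, choose $j$ with $u_iRu_i\subseteq e_jRe_j$; then for a descending chain $I_n$ of graded left $u_iRu_i$-ideals, the chain $e_jRe_jI_n$ descends through graded left $e_jRe_j$-ideals and hence stabilizes, whence $u_iRu_ie_jRe_jI_n$ stabilizes, and the identity $u_iRu_ie_jRe_jI_m=u_iRu_iRe_ju_iRu_iI_m=u_iRu_iI_m=I_m$ forces $I_n$ to stabilize. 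Statement (5) is then immediate, since a descending chain of graded ideals is in particular a descending chain of ideals, so the (non-graded) local artinian hypothesis supplies the required units $e_j$. Finally (6) dualizes the argument of \cite[Lemma 1.6]{AAPM}: by (3) it suffices to show each $u_iRu_i$ is graded left artinian; the left ideal $Ru_i$ is finitely generated and graded, hence graded artinian by hypothesis, so a descending chain $I_n$ of graded left $u_iRu_i$-ideals yields a descending chain $RI_n$ of graded submodules of $Ru_i$, which stabilizes, and $u_iRI_n=u_iRu_iI_n=I_n$ gives the conclusion. The right-handed versions follow by applying the same arguments to the opposite ring, which is again graded. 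I do not anticipate any obstacle beyond verifying that the embedding argument in (1) and the module-theoretic identity in (4) are genuinely direction-free, which they are.
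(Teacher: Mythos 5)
Your proposal is correct and coincides with the paper's intent: the paper simply states that the proof is completely analogous to that of Lemma \ref{noetherian_properties} and omits it, and your dualization (descending chains in place of ascending ones, with the key identity $u_iRu_ie_jRe_jI_m=u_iRu_iI_m=I_m$ and the corner argument in (6) carried over verbatim) is exactly the intended argument. No gaps.
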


In \cite{Roozbeh_Ranga}, a ring $R$ is said to be {\em graded semiprime} if $R$ has no nonzero nilpotent graded ideals. If $R$ is graded locally unital, this condition is equivalent with $xRx=0$ implies that $x=0$ for every homogeneous element $x.$ Using this characterization, it is direct to check that every graded locally unital and graded regular ring is graded semiprime.  

A graded semiprime ring $R$ is said to be {\em graded semisimple} if $R$ is a direct sum of minimal graded left (equiv. right) ideals. This last condition is equivalent to $R$ being equal to its graded socle. The following lemma states that some well-known facts for non-graded rings hold for graded rings as well. 

\begin{lemma}\label{artinian_vs_semisimple} 
Let $R$ be a graded ring. 
\begin{enumerate}[1.] 
\item If $R$ is graded semisimple, then it is graded categorically artinian. 
\item If $R$ is unital, graded regular and graded artinian, then $R$ is graded semisimple. 
\item If $R$ is graded regular and graded locally artinian, then $R$ is graded semisimple. 
\end{enumerate}
\end{lemma}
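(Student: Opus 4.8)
The plan is to prove the three statements in order, using the graded analogues of standard semisimplicity arguments. For statement (1), suppose $R$ is graded semisimple, so $R=\bigoplus_{i\in I} L_i$ for minimal graded left ideals $L_i$. Each $L_i$ is graded simple as a left module, hence trivially graded artinian; a finite direct sum of them is graded artinian; and since $R=\bigoplus_i Ru_i$ for the idempotents cutting out finitely many $L_i$'s at a time, part (1) of Lemma \ref{artinian_properties} (applied to those idempotents, or rather to the finite partial sums) shows $R$ is graded categorically left artinian, and symmetrically on the right. Here I would first record that any graded homomorphic image of a finite sum of graded simple modules is again a finite sum of graded simple modules, hence graded artinian, so every finitely generated graded left module is graded artinian.

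For statement (2), assume $R$ is unital, graded regular and graded artinian. The argument mirrors the classical one: graded regularity implies $R$ is graded semiprime (as noted in the excerpt just before the lemma, every graded locally unital graded regular ring is graded semiprime, and a unital ring is graded locally unital). Since $R$ is graded artinian and graded semiprime, one runs the graded version of the Artin--Wedderburn reduction: pick a minimal graded left ideal $L$; by graded regularity (or graded semiprimeness together with the descending chain condition) $L$ is generated by a homogeneous idempotent, so $L$ is a graded direct summand of $R$; then write $R = L \oplus R'$ as graded left modules, note $R'$ is again unital as a ring (corner by the complementary idempotent) graded regular and graded artinian, and induct using the descending chain condition to terminate. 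This exhibits $R$ as a finite direct sum of minimal graded left ideals, i.e. graded semisimple.

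For statement (3), assume $R$ is graded regular and graded locally left (and right) artinian. By part (3) of Lemma \ref{artinian_properties}, $R$ has homogeneous local units $u_i$, $i\in I$, with each corner $u_iRu_i$ graded artinian; each such corner is also graded regular, since graded regularity passes to corners by homogeneous idempotents. By statement (2) applied to each $u_iRu_i$, the corner is graded semisimple, hence equal to its own graded socle. It then remains to assemble these corner decompositions into a global one: every homogeneous element $x$ lies in some $u_iRu_i$, hence in a finite sum of minimal graded left ideals of $u_iRu_i$, and one checks a minimal graded left ideal $M$ of $u_iRu_i$ is a minimal graded left ideal of $R$ (using $RM = Ru_iM$ via the local unit and that $M$ is idempotent-generated inside the corner, so $RM$ is a graded left ideal of $R$ whose intersection with the corner is $M$; minimality in the corner forces minimality in $R$). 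Thus $R$ coincides with its graded socle and is therefore graded semisimple.

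The main obstacle I anticipate is statement (3), specifically the passage from ``each corner $u_iRu_i$ is graded semisimple'' to ``$R$ is graded semisimple'': one must verify carefully that minimal graded left ideals of the corners remain minimal in $R$ and that their $R$-span generates all of $R$, so that $R$ equals its graded socle; the subtlety is that the $u_i$ need not be orthogonal, so one cannot simply write $R=\bigoplus u_iRu_i$, and instead must argue via the graded socle (the sum of all minimal graded left ideals) being a graded ideal that contains every homogeneous element. Statements (1) and (2) are routine translations of the classical arguments, with graded regularity supplying the homogeneous idempotents that make minimal graded left ideals into graded direct summands.
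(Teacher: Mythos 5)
Your part (1) is the same argument as the paper's, and your skeleton for (3) also matches the paper's (reduce to corners $uRu$ via part (3) of Lemma \ref{artinian_properties}, apply (2) to the corner, conclude that every homogeneous element lies in the graded socle of $R$). You diverge in the two structural steps. For (2), the paper simply quotes the graded analogues of ``graded regular implies graded Jacobson radical zero'' and of the Artin--Wedderburn theorem, while you peel off minimal graded left ideals by hand; that is a fine, more self-contained variant, but do the induction inside the left $R$-module $R(1-e)$ (its minimal graded submodules are idempotent-generated by graded regularity, hence direct summands of $R$, and the modular law plus the descending chain condition terminates the process) rather than in the corner ring $(1-e)R(1-e)$: translating graded semisimplicity of that corner into a decomposition of $R(1-e)$ into minimal graded left ideals of $R$ is exactly the corner-to-ring transfer you have not yet proved, so the corner formulation would be circular. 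For (3), the paper does not prove the transfer directly either: it invokes the graded version of the local-ring-at-an-element socle result of \cite{Mercedes_Lozano}, specialized to a homogeneous idempotent, to get that $x$ lies in the graded socle of $uRu$ if and only if it lies in the graded socle of $R$. Your replacement---showing that minimal graded left ideals of the corner generate minimal graded left ideals of $R$---is a legitimate elementary substitute and essentially unpacks the cited result.

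That replacement is, however, the one place where your justification as written is incomplete. Writing $M=(uRu)f$ with $f$ a homogeneous idempotent, the $R$-ideal in question is $RM=Rf$ (note $M$ itself is not an $R$-ideal, so the phrase ``$M$ is a minimal graded left ideal of $R$'' should be read this way), and the observation $Rf\cap uRu=M$ does not by itself give minimality: a nonzero graded $R$-submodule $N\subseteq Rf$ could a priori satisfy $uN=0$. You must exclude this, and graded semiprimeness does it: if $uN=0$ then $fN=ufN=0$, so $N^{2}\subseteq RfN=0$, and since $R$ has (homogeneous) local units $N\subseteq NR$ with $(NR)^{2}=0$, contradicting the graded semiprimeness of $R$ (available because $R$ is graded locally unital and graded regular); whereas if $uN\neq 0$ then $uN$ is a nonzero graded left ideal of $uRu$ contained in $M$, so $uN=M$, hence $f\in N$ and $N=Rf$. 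With this argument inserted, your proof of (3) is complete and genuinely different in mechanism from the paper's citation-based one, at the cost of a page of verification that the paper outsources to \cite{Mercedes_Lozano}.
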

\begin{proof}
To show (1), let $R$ be graded semisimple. Each minimal graded left ideal from the decomposition of $R$ as a direct sum of such ideals, is graded artinian since it is graded simple. Thus, any finitely generated graded free left module is graded artinian and so any finitely generated graded left module (which is a graded homomorphic image of a finitely generated graded free left module) is graded artinian too. 

The proof of (2) is analogous to the non-graded version. If $R$ is unital and graded regular, then the graded Jacobson radical is zero (by the graded version of \cite[Corollary 1.2]{Goodearl_book}). If $R$ is also graded artinian, then the graded version of Artin-Wedderburn theorem (see \cite[Remark 1.4.8]{Roozbeh_graded_ring_notes}) implies that $R$ is a finite direct sum of minimal graded left ideals each of which is graded isomorphic to a graded matrix ring over a graded field (a commutative graded ring in which every nonzero homogeneous element has a multiplicative inverse). Thus, $R$ is graded semisimple.  

To show (3), it is sufficient to show that $R$ is equal to its graded socle. We recall that the local ring $R_a$ for $a\in R$ is defined as $R_a=\{axa\, |\, x\in R
\}$ with multiplication $axa\cdot aya=axaya$. The element $axa$ is denoted by $\overline{x}$ when considered as an element of $R_a.$ We also recall 
\cite[Proposition 2.1 (v)]{Mercedes_Lozano} stating that for an element $a$ of a semiprime ring $R,$ $\overline{x}$ is in the socle of $R_a$ if and only if $axa$ is in the socle of $R$. It is direct to check that the graded version of this statement holds as well. In the special case when $a$ is a homogeneous idempotent and $x$ is a homogeneous element of $aRa,$ then $x$ is in the graded socle of $aRa$ if and only if $x$ is in the graded socle of $R$.  

Let $x\in R$ be homogeneous and $u$ a homogeneous idempotent such that $x\in uRu$ and $uRu$ is graded artinian (such $u$ exists by part (3) of Lemma \ref{artinian_properties}). Since $R$ is graded regular, $uRu$ is graded regular too and so $uRu$ is graded semisimple by part (2). Thus, $x$ is in the graded socle of $uRu$ and hence $x$ is in the graded socle of $R$ by the previous paragraph.  
\end{proof}

\subsection{Locally artinian Leavitt path algebras}

By \cite[Theorem 2.4]{AAPM}, if $E$ is a countable, row-finite graph and $K$ a field, the following conditions are equivalent. 
\begin{enumerate}[1.] 
\item $L_K(E)$ is semisimple. 
\item $L_K(E)$ is categorically left (right) artinian. 
\item $L_K(E)$ is locally  left (right) artinian. 
\item $E$ is acyclic and every infinite path ends in a sink.
\item $L_K(E) \cong \bigoplus_{i\in I} \M_{\kappa_i} (K),$ where $I$ is a countable set, and each $\kappa_i$ is a countable cardinal.
\end{enumerate}
The isomorphism in the last condition can be taken to be a ring or an algebra isomorphism.  We characterize the graded and the non-graded artinian-related properties of Leavitt path algebras without imposing any restrictions on the cardinality of the graph. 

\begin{theorem}\label{artinian}
For any graph $E$ and a field $K,$ the following conditions are equivalent with conditions (1) to (15) of Corollary \ref{no-exit_corollary}, Corollary \ref{projectives}, Theorem \ref{noetherian}, and Remark \ref{properties_12_to_15}. 
\begin{enumerate}
\item[(15)] $L_K(E)$ is graded semisimple. 
\item[(16l)] $L_K(E)$ is graded categorically left  artinian.
\item[(16r)] $L_K(E)$ is graded categorically right artinian.
\item[(17l)] $L_K(E)$ is graded locally left  artinian.
\item[(17r)] $L_K(E)$ is graded locally right artinian.
\end{enumerate}
We let (16) and (17) stand for statements (16l) and (17l) respectively, with the word ``left'' deleted. 

The following conditions are equivalent with conditions (1') to (5') of Corollary \ref{acyclic_corollary} and imply conditions (1) to (17). 
\begin{enumerate}
\item[(5')] $E$ is a row-finite, acyclic graph such that every infinite path ends in a sink.
\item[(6')] $L_K(E)$ is semisimple. 
\item[(7'l)] $L_K(E)$ is categorically left artinian. 
\item[(7'r)] $L_K(E)$ is categorically right artinian. 
\item[(8'l)] $L_K(E)$ is locally left artinian.
\item[(8'r)] $L_K(E)$ is locally right artinian.
\end{enumerate}
We let (7') and (8') stand for statements (7'l) and (8'l) respectively, with the word ``left'' deleted.  
\end{theorem}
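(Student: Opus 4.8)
The plan is to establish the two clusters of equivalences separately, using the structural characterization in Proposition \ref{ultramatricial_rep} and Corollaries \ref{no-exit_corollary} and \ref{acyclic_corollary} as the backbone, together with the auxiliary Lemmas \ref{artinian_properties} and \ref{artinian_vs_semisimple} — which are the exact graded-artinian analogues of what was done in the noetherian section. First I would treat the first block, conditions (15), (16l)--(17r). The left-right symmetry among (16l)--(17r) is immediate from the fact that the vertices form a set of orthogonal homogeneous local units with $(L_K(E)v)^*=vL_K(E)$ and $(vL_K(E)v)^*=vL_K(E)v$, exactly as in the proof of Theorem \ref{noetherian}. The implications $(15)\Rightarrow(16)$ is part (1) of Lemma \ref{artinian_vs_semisimple}, and $(16)\Rightarrow(17)$ is part (6) of Lemma \ref{artinian_properties} (the vertices giving graded local units). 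To close the loop it then suffices to show $(17)\Rightarrow(5')$-type graph conditions or, more efficiently, to fold this block into the already-established chain (1)--(15): I would show that $(17)$ implies one of the equivalent conditions of Theorem \ref{noetherian} (e.g. that $L_K(E)$ is graded locally left noetherian, since graded artinian corners are in particular graded noetherian, being also semisimple by Lemma \ref{artinian_vs_semisimple}(2) once one knows regularity on each corner), hence $E$ is a row-finite no-exit graph in which infinite paths end in sinks or cycles; and conversely that condition (5) — already known equivalent to (15) via the Remark — together with the graded locally matricial representation of Proposition \ref{ultramatricial_rep} yields $(15)$ directly, since a direct sum of graded matrix algebras over $K$ and over $K[x^n,x^{-n}]$ is graded semisimple. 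Here the key point is that $K[x^n,x^{-n}]$ is a \emph{graded field} in the $\Zset$-grading, so graded matrix algebras over it are graded simple and $L_K(E)$ coincides with its graded socle — this is precisely condition (14)/(15) of Remark \ref{properties_12_to_15}. Thus the first block is equivalent to (1)--(15).

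For the second block, conditions (5'), (6'), (7'l)--(8'r), the argument is parallel but now uses Corollary \ref{acyclic_corollary} instead of \ref{no-exit_corollary}. Again left-right symmetry is automatic from the involution. The implication $(5')\Rightarrow(6')$ follows from Corollary \ref{acyclic_corollary}: under (5'), $L_K(E)$ is $*$-isomorphic to $\bigoplus_{i\in I}\M_{\kappa_i}(K)$, which is semisimple (each $\M_{\kappa_i}(K)$ is a direct sum of copies of the simple module $K^{\kappa_i}$, hence equal to its socle). Then $(6')\Rightarrow(7')$ is the (non-graded) Lemma \ref{artinian_vs_semisimple}(1)-type fact — a semisimple locally unital ring is categorically artinian — and $(7')\Rightarrow(8')$ is the non-graded version recalled before Definition of graded locally artinian (\cite[Lemma 1.5]{AAPM}), using that the vertices are local units. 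To close the loop, $(8')\Rightarrow(5')$: if $L_K(E)$ is locally left artinian, then in particular it is locally left noetherian, so by Theorem \ref{noetherian} $E$ is row-finite no-exit with infinite paths ending in sinks or cycles; and the presence of a cycle $c$ based at $v$ would give, inside $vL_K(E)v\cong K[x^n,x^{-n}]$ (no exit!), a strictly descending chain of left ideals $vL_K(E)v\,c^m(c^*)^m$ — indeed $K[x^n,x^{-n}]$ is not artinian — contradicting local left artinianness of $L_K(E)$. Hence $E$ is acyclic and (5') holds. Finally, $(5')\Rightarrow$ conditions (1)--(17): by Corollary \ref{acyclic_corollary}, (5') is (5') of that corollary, which is equivalent to (5) of Corollary \ref{no-exit_corollary} being strengthened, hence (5') implies (5), which is equivalent to all of (1)--(17) already.

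The main obstacle I anticipate is not any single deep step but rather the bookkeeping of the two different "matricial" pictures — the graded-$*$ one over $K$ and $K[x^n,x^{-n}]$ from Proposition \ref{ultramatricial_rep}, and the non-graded one over $K$ and $K[x,x^{-1}]$ — and verifying cleanly that (i) a direct sum of graded matrix algebras over the graded field $K[x^n,x^{-n}]$ is genuinely graded semisimple (this needs the observation that $K[x^n,x^{-n}]$ has no proper nonzero graded ideals, so its graded matrix algebras are graded simple with a minimal graded left ideal), and (ii) that local artinianness forces acyclicity — the crux being the non-artinian descending chain $c^m(c^*)^m$ inside the Laurent-polynomial corner attached to a cycle. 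Once these two observations are in place, everything else is a routine assembly of the previously established equivalences, Lemmas \ref{artinian_properties} and \ref{artinian_vs_semisimple}, and the corollaries of Proposition \ref{ultramatricial_rep}.
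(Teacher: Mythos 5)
Your overall architecture is sound and largely parallels the paper's proof: left--right symmetry from the involution and the vertex local units, $(15)\Rightarrow(16)$ from part 1 of Lemma \ref{artinian_vs_semisimple}, $(16)\Rightarrow(17)$ from part (6) of Lemma \ref{artinian_properties}, $(7')\Rightarrow(16)$ and $(8')\Rightarrow(17)$ from parts (2) and (5) of that lemma, and Corollary \ref{acyclic_corollary} for the acyclic block. Where you differ is in closing the loops. The paper closes the graded block by $(17)\Rightarrow(15)$ directly from part 3 of Lemma \ref{artinian_vs_semisimple} (using graded regularity of $L_K(E)$ from \cite{Roozbeh_regular}) and then identifies $(15)$ with condition (1) via the graded socle theorem \cite[Theorem 2.10]{Roozbeh_Ranga}; you instead route $(17)$ through graded local noetherianity and Theorem \ref{noetherian}, and argue $(5)\Rightarrow(15)$ by hand. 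That works, but be careful that graded semisimplicity must be read off the direct-sum form of condition (1) of Corollary \ref{no-exit_corollary} (equivalently \cite[Proposition 5.1]{Roozbeh_Lia}), not off the direct-limit representation of Proposition \ref{ultramatricial_rep}: a direct limit of graded semisimple algebras need not be graded semisimple. Likewise, for $(8')\Rightarrow(5')$ the paper passes through $(17)\Leftrightarrow(5)$ and the matricial picture, while you invoke Hopkins--Levitzki on unital corners; that is legitimate, just an external classical input the paper avoids.

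The one step that fails as written is your witness for non-artinianness at a cycle. If $c$ is a cycle \emph{without exit} based at $v$, then (CK2) applied at each vertex of $c$ gives $cc^*=v$ (and $c^*c=v$), so $c^m(c^*)^m=v$ for all $m$ and the chain $vL_K(E)v\,c^m(c^*)^m$ is constant, not strictly descending; the chains built from $v-c^m(c^*)^m$ are only useful when $c$ \emph{has} an exit, as in the noetherian arguments. What you actually need, and correctly state in the same sentence, is that $vL_K(E)v\cong K[x,x^{-1}]$ is not left artinian; a correct witness is, for instance, the strictly descending chain of ideals generated by $(1+x)^m$, i.e. $vL_K(E)v(v+c)^m$, since $1+x$ is a non-invertible prime in the principal ideal domain $K[x,x^{-1}]$. (Incidentally, the paper's own proof exhibits the chain $K[x,x^{-1}]x^n$, which has the same defect because $x$ is a unit; the underlying fact that $K[x,x^{-1}]$ is not artinian is all that either argument requires, so the repair is the same one line in both cases.) With that substitution your proposal goes through.
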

\begin{proof}
The left-right symmetry in the conditions holds for the same reason as in Theorem \ref{noetherian}. 

The implication (15) $\Rightarrow$ (16) holds by the first part of Lemma \ref{artinian_vs_semisimple} and (17) $\Rightarrow$ (15) holds by the third part of Lemma \ref{artinian_vs_semisimple} since $L_K(E)$ is graded regular by \cite[Theorem 9]{Roozbeh_regular}. The implication (16) $\Rightarrow$ (17) holds by part (6) of Lemma \ref{artinian_properties}. Condition (1) of Corollary \ref{no-exit_corollary} implies (15) and the converse holds by \cite[Theorem 2.10]{Roozbeh_Ranga} which states that the graded socle of a Leavitt path algebra is exactly the graded algebra as in condition (1) of Corollary \ref{no-exit_corollary}. Thus, conditions (1) to (17) are equivalent. 

The implications (7') $\Rightarrow$ (16) and (8') $\Rightarrow$ (17) hold by parts (2) and (5) of Lemma \ref{artinian_properties}. 

Conditions (6') and (8') are equivalent for any semiprime ring by \cite[Theorem 2.3]{AAPM}. The implication (7') $\Rightarrow$ (8') holds by part (6) of Lemma \ref{artinian_properties} if the grading is not considered (also by \cite[Lemma 1.5]{AAPM}). The implication (5') $\Rightarrow$ (7') holds by Corollary \ref{acyclic_corollary} since any direct sum of algebras of the form $\M_\kappa(K),$ for a cardinal $\kappa,$ is categorically artinian. 

It remains to show that (8') implies (5'). Condition (8') implies condition (17), which is shown to be equivalent to condition (5) of Corollary \ref{no-exit_corollary}. Thus, $E$ is a row-finite, no-exit graph in which every infinite path ends in a sink or a cycle. Assuming that $E$ has a cycle, there is an algebra of the form $\M_{\mu_j}(K[x,x^{-1}])$ present in the algebra $S$ from Corollary \ref{no-exit_corollary}. The algebra $K[x,x^{-1}]$ is not left artinian since the left ideals $K[x,x^{-1}]x^n$ are strictly decreasing. Hence, $E$ has to be without cycles. Thus, (5') holds. 
\end{proof}

We note that the algebra $K[x,x^{-1}]$ is graded artinian since it is graded simple (see \cite[p. 463]{Roozbeh_Ranga}), but not artinian. Considering the loop $ \xymatrix{ \bullet\ar@(ru,rd)}\;\;\;\;\;$ and the fact that the Leavitt path algebra of this graph is $*$-isomorphic to $K[x,x^{-1}],$ we can see that the implication below is strict.
{\em \begin{center}
$L_K(E)$ is locally artinian $\;\;\Longrightarrow\;\;$ $L_K(E)$ is graded locally artinian 
\end{center}}
Thus, the equivalent conditions (1') to (8') of Theorem \ref{artinian} are strictly stronger than the equivalent conditions (1) to (17).

\end{document}